\newcommand\blfootnote[1]{%
  \begingroup
  \renewcommand\thefootnote{}\footnote{#1}%
  \addtocounter{footnote}{-1}%
  \endgroup
}
\newtheorem{theorem}{Theorem}
\newtheorem{prop}{Proposition}
\newtheorem{lemma}{Lemma}
\newtheorem*{remark}{Remark}
\newtheorem{claim}{Claim}
\newtheorem*{definition}{Definition}
\newtheorem{cor}{Corollary}
\def\Xint#1{\mathchoice
  {\XXint\displaystyle\textstyle{#1}}%
  {\XXint\textstyle\scriptstyle{#1}}%
  {\XXint\scriptstyle\scriptscriptstyle{#1}}%
  {\XXint\scriptscriptstyle\scriptscriptstyle{#1}}%
  \!\int}
\def\XXint#1#2#3{{\setbox0=\hbox{$#1{#2#3}{\int}$}
  \vcenter{\hbox{$#2#3$}}\kern-.5\wd0}}
\def\dashint{\Xint-}
\author{Gang Liu}
\address{Department of Mathematics\\University of California, Berkeley\\Berkeley, CA 94720}
\email{gangliu@math.berkeley.edu}
\title[tangent cone of K\"ahler manifolds]{On the tangent cone of K\"ahler manifolds with Ricci curvature lower bound}
\date{}
\begin{document}
\begin{abstract}
Let $X$ be the Gromov-Hausdorff limit of a sequence of pointed complete K\"ahler manifolds $(M^n_i, p_i)$ satisfying $Ric(M_i)\geq -(n-1)$ and the volume is noncollapsed. 
We prove that, there exists a Lie group isomorphic to $\mathbb{R}$, acting isometrically, on the tangent cone at each point of $X$. Moreover, the action is locally free on the cross section. This generalizes the metric cone theorem of Cheeger-Colding to the K\"ahler case. We also discuss some applications to complete K\"ahler manifolds with nonnegative bisectional curvature.\end{abstract}
\maketitle

\section{\bf{Introduction}}
\blfootnote{The author was partially supported by NSF grant DMS 1406593.}
In \cite{[CC1]}, Cheeger and Colding proved the important metric cone theorem: 
\begin{theorem}\label{cc}[Cheeger-Colding]
Let $(X, p_\infty)$ be the Gromov-Hausdorff limit of a sequence of pointed complete Riemannian manifolds $(M^m_i, p_i)$ with $Ric(M_i)\geq -(m-1)$ and noncollapsed volume. Consider a point $q\in X$ and a tangent cone $(X_q, q)$. Then there exists a compact metric length space $\Sigma$ so that $(X_q, q)$ is isometric to the warped product $\Sigma\times_{r^2}\mathbb{R}^+$.\end{theorem}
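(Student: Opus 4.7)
The plan is to prove the sharper Cheeger--Colding statement that ``almost volume cone implies almost metric cone'' on the smooth approximants $M_i$, then pass to the Gromov--Hausdorff limit to obtain the exact cone structure on $X_q$.

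\emph{Reduction to almost volume cone.} Bishop--Gromov asserts that $r \mapsto V(B(\cdot, r))/V_{-1}(r)$ is non-increasing, where $V_{-1}$ is the model volume with curvature $-1$. Rescaling the $M_i$ by a sequence $r_k \to 0$ so that $(M_i, r_k^{-2} g_i, q_i)$ converges to $(X_q, q)$, the Ricci lower bound becomes trivial and this ratio tends to $V(B(q, r))/r^m$, which must therefore be \emph{constant} in $r$. Consequently, for $(i,k)$ large, the annulus $A := \{r_k/2 \le b \le 2 r_k\}$ on $M_i$ (with $b = d(\cdot, q_i)$) is a quantitative almost volume cone.

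\emph{Hessian of $b^2/2$ is close to the metric.} Set $f = b^2/2$, so that $|\nabla f|^2 = 2f$ and Laplace comparison gives $\Delta f \le m$ in the barrier sense. The almost volume cone hypothesis, combined with integration by parts of $\Delta f$ against a cutoff, forces $\dashint_A |\Delta f - m|^2 \, dV < \epsilon$. Inserting this into Bochner's formula
\[
\tfrac12 \Delta |\nabla f|^2 = |\mathrm{Hess}\,f|^2 + \langle \nabla f, \nabla \Delta f\rangle + \mathrm{Ric}(\nabla f, \nabla f),
\]
using $\tfrac12 \Delta |\nabla f|^2 = \Delta f$, and integrating against a suitable cutoff $\phi^2$ yields $\dashint_A |\mathrm{Hess}\,f - g|^2 < \epsilon'$. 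This is the analytic heart of the argument. On the rescaled manifolds the Ricci bound becomes $\mathrm{Ric} \ge -(m-1)r_k^2 \to 0$, so the Ricci error is harmless.

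\emph{From integral Hessian to metric cone.} The Cheeger--Colding segment inequality translates this $L^2$-smallness of $\mathrm{Hess}\,f - g$ into pointwise control along a definite-measure family of geodesics; the gradient flow of $\nabla f$ is then an $\epsilon''$-approximate radial dilation, producing an $\epsilon''$-Gromov--Hausdorff isometry between $A$ and an annulus in the metric cone over the cross section $\Sigma_{r_k} := \{b = r_k\}$ with its induced length metric, suitably normalized. Passing $(i,k) \to \infty$ and invoking Gromov precompactness on the cross sections yields a compact length space $\Sigma$ and the desired isometry $X_q \cong \Sigma \times_{r^2} \mathbb{R}^+$.

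The principal obstacle is this last step---converting the $L^2$ Hessian bound into genuine metric information when $b$ is only Lipschitz. This requires the full force of the segment inequality together with a careful handling of boundary terms from the cutoff (which must not overwhelm the Bochner gain) and a smoothing of $b$ (e.g.\ by the heat kernel or a harmonic approximation, with errors controlled using the Ricci bound). Keeping all error terms uniform as the scales $r_k$ and the approximants $M_i$ vary is the delicate technical content.
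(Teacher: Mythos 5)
First, note that the paper does not prove this statement at all: Theorem \ref{cc} is quoted as background from Cheeger--Colding \cite{[CC1]} (together with the volume convergence theorem of \cite{[C]} and \cite{[CC2]} for the tangent-cone formulation), so the only meaningful comparison is with their argument, whose overall shape your sketch does follow: constancy of the Bishop--Gromov ratio on the tangent cone, ``almost volume annulus implies almost metric annulus'' on the approximating manifolds, and a limiting argument. Two remarks on the reduction step: to know the annuli in $M_i$ are quantitative almost volume cones you need volume \emph{convergence} (Colding's theorem), not just GH convergence, to transfer the constancy of the limit ratio back to the $M_i$; you assert this transfer without it.

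The genuine gap is in the analytic heart. You apply Bochner directly to $f=b^2/2$ and claim $\dashint_A|\Delta f-m|^2<\epsilon$ from integration by parts. But $b$ is only Lipschitz: $\Delta f$ is a measure whose absolutely continuous part can be very negative near the cut locus, so the almost-volume-cone condition only yields an $L^1$ bound on $(m-\Delta f)$ (using the one-sided Laplace comparison), and no $L^2$ bound follows; moreover the term $\langle\nabla f,\nabla\Delta f\rangle$ in Bochner is not even defined for $f$ in any usable sense. This is precisely why Cheeger--Colding do not work with $b^2/2$ itself: they solve a Dirichlet/Poisson problem on the annulus to produce a smooth function $\rho$ with $\Delta\rho$ controlled and with $\nabla\rho$ close to $\nabla(b^2/2)$ in $L^2$ (via the maximum principle and the almost volume condition), and run Bochner for $\rho$ --- this is exactly the function $\rho_i$ satisfying (\ref{1})--(\ref{2}) that the present paper imports from Corollary 4.42, Lemma 4.45, Propositions 4.50 and 4.82 of \cite{[CC1]}. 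You acknowledge the smoothing issue only in a closing caveat, so the main chain of inequalities as written does not go through. Finally, the step ``$L^2$-small $\mathrm{Hess}\,f-g$ implies the annulus is GH-close to a cone annulus'' is itself the bulk of \cite{[CC1]} (the segment-inequality/gradient-flow construction of the approximate warped-product structure, including construction of the cross section and control of the induced metric on it); in your proposal it is asserted in two sentences, so as a proof of Theorem \ref{cc} the argument is an outline of the correct strategy rather than a complete proof.
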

This theorem implies for complete manifolds, with nonnegative Ricci curvature and maximal volume growth, all tangent cones at infinity are metric cones. Also, the ideas behind the proof resolve many important problems: e.g., splitting theorem for Gromov-Hausdorff limit $X$, where $M^n_i\to X$, $Ric_{M_i}\geq -\epsilon_i$ and $\epsilon_i\to 0$.
See also \cite{[CC2]}\cite{[CC3]}\cite{[CC4]} for further developments.

Theorem \ref{cc} says $X_q$, as a metric space, has a symmetry from the radial direction. Thus we can reduce the geometry to the cross section.
In this paper, we are interested in generalizing theorem \ref{cc} to K\"ahler manifolds. We wonder whether there are extra symmetries for $X_q$. Observe that if $X_q$ is smooth (K\"ahler) apart from the vertex, the cross section is a Sasakian manifold \cite{[MSY]}. Thus, it has an additional symmetry induced by the reeb vector field. 

In this paper, we only assume the Ricci curvature lower bound and noncollapsed volume. In this case, it was proved in \cite{[CCT]} that the any tangent cone splits off even dimensional Euclidean factor. The main result in this paper is the following:
\begin{theorem}\label{thm1}
Let $(X, p_\infty)$ be the Gromov-Hausdorff limit of a sequence of pointed complete K\"ahler manifolds $(M^n_i, p_i)$ with $Ric(M_i)\geq -(n-1)$ and noncollapsed volume. Consider a point $q\in X$ and a tangent cone $(X_q, q)=\Sigma\times_{r^2}\mathbb{R}^+$. Then there exists a Lie group $\sigma_t$, isomorphic to $\mathbb{R}$, acting isometrically and locally freely on $\Sigma$. Obviously, we can extend the isometry $\sigma_t$ to $(X_q, q)$ which preserves the vertex $q$. 
\end{theorem}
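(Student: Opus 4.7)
The strategy is to use the parallel complex structure $J$ on the approximating K\"ahler manifolds to build a vector field that is approximately Killing in an $L^2$ sense, and then show that its flow converges, along a subsequence, to a one-parameter isometry group on $X_q$.

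\textbf{Step 1: rescale and approximate the radial function.} By a standard rescaling $(M_i, p_i, g_i) \leadsto (M_i, q_i, \lambda_i^{2} g_i)$ I may assume the rescaled sequence converges in pointed Gromov--Hausdorff sense to $(X_q, q)$, with rescaled Ricci lower bound tending to $0$. Writing $r(x)=d(q,x)$, the cone structure from Theorem~\ref{cc} gives $\nabla r = \partial_r$ and $\nabla^2(r^2/2) = g$ on the regular set. By the Cheeger--Colding theory \cite{[CC1]}, on each annular region $A(\delta, R, q_i)$ in $M_i$ there exists a function $b_i$ (harmonic or obtained by solving a Poisson equation) with $b_i \to r^2/2$ uniformly and
\[
\dashint_{A(\delta,R,q_i)} \bigl|\nabla^2 b_i - g\bigr|^2 \,\longrightarrow\, 0, \qquad \dashint_{A(\delta,R,q_i)} \bigl||\nabla b_i|^2 - r^2\bigr| \,\longrightarrow\, 0.
\]

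\textbf{Step 2: approximate Killing field via $J$.} Set $V_i := J\nabla b_i$. Since $\nabla J = 0$,
\[
\nabla V_i \;=\; J\circ \nabla^2 b_i.
\]
Because $J$ is skew-symmetric and $\nabla^2 b_i$ is $L^2$-close to the identity endomorphism on $A(\delta,R,q_i)$, the tensor $\nabla V_i$ is $L^2$-close to $J$, which is skew. Hence the deformation tensor $\tfrac{1}{2}\mathcal{L}_{V_i} g = \operatorname{Sym}(\nabla V_i)$ has $L^2$-norm tending to zero. Moreover $g(V_i, \nabla b_i)=g(J\nabla b_i,\nabla b_i)=0$, so $V_i$ is tangent to the level sets of $b_i$, and $|V_i|^2 = |\nabla b_i|^2 \to r^2$.

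\textbf{Step 3: pass the flow to the limit.} Using Cheeger--Colding's segment inequality applied to $|\mathcal{L}_{V_i} g|$, for almost every pair of points the flow $\phi^{V_i}_t$ distorts distance by $o(1)$ over a fixed time interval. Combined with uniform Lipschitz bounds on $V_i$ (coming from $\nabla^2 b_i$ being bounded in $L^2$, and from an Abresch--Gromoll--type sup estimate on $|V_i|$), a diagonal subsequence of the flows $\phi^{V_i}_t$ converges to a one-parameter family of maps $\sigma_t : X_q \to X_q$. The $L^2$-smallness of the deformation tensor forces $\sigma_t$ to be an isometry for each $t$ and the cocycle property $\sigma_{s+t} = \sigma_s\circ\sigma_t$ passes to the limit, producing a continuous $\mathbb{R}$-action. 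The cone's scaling invariance extends $\sigma_t$ from the annulus to all of $X_q$, and $\sigma_t(q)=q$ since $r\circ\sigma_t=r$.

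\textbf{Step 4: preservation of $\Sigma$ and local freeness.} Orthogonality of $V_i$ to $\nabla b_i$ yields $r\circ\sigma_t = r$ in the limit, so $\sigma_t$ preserves $\Sigma=\{r=1\}$. On $\Sigma$ the generating field has limiting norm $1$ (from $|V_i| \to r$), so no orbit in $\Sigma$ is instantaneously stationary; this gives local freeness.

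\textbf{Main obstacle.} The technical heart is Step~3: converting the $L^2$-approximate Killing estimate on the smooth $M_i$ into a genuine isometric $\mathbb{R}$-action on the generally singular limit $X_q$. In the classical smooth setting this is automatic from an ODE argument, but here one only has integral control on $\nabla^2 b_i$ and must exclude a small ``bad set'' of integral curves of $V_i$ over a uniform time interval. This requires a careful interplay of the segment inequality, the $L^2$-Hessian estimates, and properties of good annular neighborhoods, in the spirit of the splitting and cone theorems of \cite{[CC1]}. A secondary subtlety is verifying that the limit group is $\mathbb{R}$ (rather than a quotient) as claimed, which follows once one shows the orbit map through a generic regular point of $\Sigma$ is locally injective, an immediate consequence of the unit-speed estimate above.
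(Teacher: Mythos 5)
Your Steps 1--3 follow essentially the same route as the paper: a Cheeger--Colding approximation $\rho_i\approx r_i^2/2$ on an annulus, the field $X_i=J\nabla\rho_i$ whose symmetrized covariant derivative is small in the integral sense because $J$ is parallel and skew, the segment inequality plus the first variation of arclength to control the distance distortion of the flow, a measured Arzel\`a--Ascoli limit $\sigma_t$, commutativity to define $\sigma_t$ for all $t$, and orthogonality of $X_i$ to $\nabla\rho_i$ to see that the limit preserves the radius, hence the cross section and the vertex. That part of your outline is consistent with the paper.

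The genuine gap is Step 4. Local freeness does not follow from ``$|V_i|\to r$, so no orbit in $\Sigma$ is instantaneously stationary.'' The limit $\sigma_t$ is only a measure-theoretic limit of flows; there is no generating vector field on the possibly singular space $X_q$, and unit length of the approximating fields is perfectly compatible with the limit one-parameter group fixing a point $x\in\Sigma$: the approximating integral curves can wind, at unit speed, around loops whose diameter tends to zero near a singular point, so that a ``rotation-like'' limit flow fixes $x$ even though each $X_i$ has norm comparable to $r$ there (and in any case the convergence $|X_i|\to r$ is only in an integral sense, while your claim is pointwise at a single, possibly singular, point). Excluding exactly this scenario is the heart of the theorem and occupies the entire second half of the paper's proof: assuming $\sigma_t(x)=x$ for all $t$, one blows up at $x$, invokes the K\"ahler condition a second time through Cheeger--Colding--Tian (Theorem 9.1, giving an even-dimensional Euclidean factor in the tangent cone at $x$, together with Lemmas 9.5 and 9.14) to produce a harmonic function $b$ on the rescaled balls with $\dashint|\langle\nabla b, Y_i\rangle-1|$ small, where $Y_i$ is the rescaled $J\nabla\rho_i$; since $|\nabla b|$ is bounded by Cheng--Yau, $b$ must increase at essentially unit rate along the flow, so the flow displaces most points of a small ball by a definite amount, contradicting (via the measured convergence) the assumption that $x$ is fixed. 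Your proposal never uses the K\"ahler structure at the hypothetical fixed point, and without an argument of this type -- showing that $Y_i$ has a definite component along a splitting (translation) direction rather than being rotational -- Step 4, and likewise the ``immediate'' local injectivity of the orbit map you invoke to conclude the group is $\mathbb{R}$, does not go through.
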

\begin{remark}
This theorem is trivial when the Ricci curvature is bounded from two sides. The original motivation of theorem \ref{thm1} is to prove a conjecture of Yau which states that the ring of polynomial growth holomorphic functions is finitely generated, provided the manifold has nonnegative bisectional curvature. By using proposition \ref{prop-100}, in some sense, we can reduce the problem to the cross section. This could be further reduced by proposition \ref{thm-10}. \end{remark}

\begin{remark}
By taking the closure of $\sigma_t$ in the isometry group of $(X_q, q)$, we obtain an effective torus group acting isometrically on $(X_q, q)$.
\end{remark}
\begin{cor}\label{co1}
Let $(X, p_\infty)$ be a tangent cone at infinity of a complete K\"ahler manifold $(M^n, p)$, with $Ric\geq 0$ and noncollapsed volume. Then there is an effective torus isometry group acting on $(X, p_\infty)$ which commutes with the homothety map induced by $\nabla r^2$, where $r(x) = d(x, p_\infty)$.
\end{cor}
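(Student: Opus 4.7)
The plan is to derive Corollary \ref{co1} as a direct application of Theorem \ref{thm1} at the vertex of the cone $X$, together with the closure construction outlined in the second remark.

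First I set up $X$ to fit the hypotheses of Theorem \ref{thm1}. Choose $r_i \to \infty$ so that $(X, p_\infty)$ is the pointed Gromov--Hausdorff limit of $M_i := (M, r_i^{-2}g, p)$; each $M_i$ is K\"ahler of complex dimension $n$ with $\mathrm{Ric}(M_i) \geq 0 \geq -(n-1)$, and the noncollapsed hypothesis on $(X, p_\infty)$ is equivalent to maximal volume growth of $M$, which gives uniform noncollapsedness of unit balls in the rescaled sequence. Thus $(X, p_\infty)$ falls under Theorem \ref{thm1}. Moreover, in the noncollapsed $\mathrm{Ric}\geq 0$ setting the ``volume cone implies metric cone'' argument of Cheeger--Colding, combined with Bishop--Gromov monotonicity, shows that $X$ itself is already a metric cone $\Sigma \times_{r^2} \mathbb{R}^+$ with vertex $p_\infty$; consequently the tangent cone $X_{p_\infty}$ coincides with $X$.

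Now take $q = p_\infty$ in Theorem \ref{thm1}. It produces a Lie group $\sigma_t \cong \mathbb{R}$ of isometries of $\Sigma$ acting locally freely, extended to $X$ fixing $p_\infty$ by $\sigma_t(z,r) = (\sigma_t(z), r)$. The homothety generated by $\nabla r^2$ acts by $h_s(z, r) = (z, e^{2s} r)$, and since $\sigma_t$ and $h_s$ act on independent factors of $\Sigma \times_{r^2}\mathbb{R}^+$, they commute on the nose. Let $T$ be the closure of $\{\sigma_t : t \in \mathbb{R}\}$ inside $\mathrm{Isom}(X)$. Since every $\sigma_t$ fixes $p_\infty$, the group $T$ sits inside the isotropy subgroup at $p_\infty$, which can be identified with $\mathrm{Isom}(\Sigma)$; the latter is compact, and by the general isometry-group theory for noncollapsed Gromov--Hausdorff limits of manifolds with lower Ricci bounds it is in fact a compact Lie group. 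Therefore $T$ is a compact connected abelian Lie group, i.e.\ a torus of positive dimension, positivity coming from the local freeness of the $\mathbb{R}$-action. Commutation with $h_s$ is a closed condition on $\mathrm{Isom}(X)$, so it extends from the dense subgroup $\{\sigma_t\}$ to all of $T$; passing to $T/\ker$ if necessary yields an effective torus action on $(X, p_\infty)$ still commuting with the homothety.

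All the analytic content is hidden inside Theorem \ref{thm1}, invoked here as a black box. The only points specific to the corollary are (i) the metric cone structure of the tangent cone at infinity, which is classical, and (ii) the fact that Theorem \ref{thm1}'s extension of $\sigma_t$ to $X$ is radially trivial, which is forced by the cone structure so that commutation with $h_s$ is automatic. The only genuine obstacle, such as it is, is to know that the isotropy group $\mathrm{Isom}(X)_{p_\infty}$ is a Lie group so that the closure of a one-parameter subgroup is an honest torus rather than merely a compact abelian topological group; this is a standard consequence of the structure theory of such limit spaces.
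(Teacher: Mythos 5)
Your argument is correct and matches the paper's intended derivation: the corollary is meant to follow by applying Theorem \ref{thm1} to the rescaled sequence $(M, r_i^{-2}g, p)$ (whose Ricci bound and noncollapsing hypotheses are exactly satisfied), noting that the radially trivial extension of $\sigma_t$ commutes with the homothety (this is Claim \ref{cl7} in the proof), and taking the closure of $\sigma_t$ in the isometry group to get an effective torus, as in the paper's second remark. Your extra observation that the tangent cone at the vertex $p_\infty$ of the cone at infinity is $X$ itself, so the theorem can be quoted verbatim at $q=p_\infty$, is just a clean repackaging of the same argument, and your appeal to the Lie-group property of the isometry group of a noncollapsed limit is the same (implicit) ingredient the paper relies on.
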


In \cite{[CT]}, Cheeger and Tian considered K\"ahler Ricci flat metric with Euclidean volume growth and quadratic curvature decay. In that case, the tangent cone at infinity is  K\"ahler apart from the vertex. They looked at the flow generated by $r\frac{\partial}{\partial r}$ and $J(r\frac{\partial}{\partial r})=\frac{\partial}{\partial\theta}$ on the tangent cone. Here $r$ is the distance to the vertex.
This is a $\tilde{\mathbb{C}^*}$ action, where $\tilde{\mathbb{C}^*}$ is the universal cover of $\mathbb{C^*} = \mathbb{C}\backslash \{0\}$. Corollary \ref{co1} says we still have this action, provided the K\"ahler manifold has nonnegative Ricci curvature and maximal volume growth.

Next we discuss applications to complete K\"ahler manifolds with nonnegative bisectional curvature and maximal volume growth.  Let $\{\mathcal{O}_d(M)\}$ be the ring of holomorphic functions with polynomial growth on $M$. We prove that these functions are all homogenous at infinity. We also study the behavior of these functions under the action of $\sigma_t$. As a corollary,  the dimension of the orders of $\{\mathcal{O}_d(M)\}$ over $\mathbb{Q}$, is no greater than, the dimension of the isometry group of $\Sigma$. In particular, if the dimension of isometry group is $1$ on the cross section, then the orders of holomorphic functions with polynomial growth are rationally related.

\medskip
\begin{center}
\bf  {\quad Acknowledgment}
\end{center}
The author thanks Professor Gang Tian for his encouragement and interest on this work during the visit to UC Berkeley on March 31, 2014. 
The author also thanks Professors John Lott, Xiaochun Rong and Jiaping Wang for valuable discussions. 

\section{\bf{Preliminary results}}
In this section, we collect some basic definitions and results required in the proof.
Let $(M^n_i, y_i, \rho_i)$ be a sequence of pointed complete Riemannian manifolds, where $y_i\in M^n_i$ and $\rho_i$ is the metric on $M^n_i$. By Gromov's compactness theorem, if $(M^n_i, y_i, \rho_i)$ have a uniform lower bound of the Ricci curvature, then a subsequence converges to some $(M_\infty, y_\infty, \rho_\infty)$ in the Gromov-Hausdorff topology. See \cite{[GLP]} for the definition and basic properties of Gromov-Hausdorff convergence.
\begin{definition}
Let $K_i\subset M^n_i\to K_\infty\subset M_\infty$ in the Gromov-Hausdorff topology. Assume $\{f_i\}_{i=1}^\infty$ are functions on $M^n_i$, $f_\infty$ is a function on $M_\infty$.  
$\Phi_i$ are $\epsilon_i$-Gromov-Hausdorff approximations, $\lim\limits_{i\to\infty} \epsilon_i = 0$. If $f_i\circ \Phi_i$ converges to $f_\infty$ uniformly, we say $f_i\to f_\infty$ uniformly over $K_i\to K_\infty$.
\end{definition}
 In many applications, $f_i$ are equicontinuous. The Arzela-Ascoli theorem applies to the case when the spaces are different.  When $(M_i^n, y_i, \rho_i)\to (M_\infty, y_\infty, \rho_\infty)$ in the Gromov-Hausdorff topology, any bounded, equicontinuous sequence of functions $f_i$ has a subsequence converging uniformly to some $f_\infty$ on $M_\infty$.

Let the complete pointed metric space $(M_\infty^m, y)$ be the Gromov-Hausdorff limit of a sequence of connected pointed Riemannian manifolds, $\{(M_i^n, p_i)\}$, with $Ric(M_i)\geq 0$. Here $M_\infty^m$ has Haudorff dimension $m$ with $m\leq n$. A tangent cone at $y\in M_\infty^m$ is a complete pointed Gromov-Hausdorff limit $((M_\infty)_y, d_\infty, y_\infty)$ of $\{(M_\infty, r_i^{-1}d, y)\}$, where $d, d_\infty$ are the metrics of $M_\infty, (M_\infty)_y$ respectively, $\{r_i\}$ is a positive sequence converging to $0$.

\begin{definition}
A point $y\in M_\infty$ is called regular, if there exists some $k$ so that every tangent cone at $y$ is isometric to $\mathbb{R}^k$. A point is called singular, if it is not regular.
\end{definition}
\begin{theorem}[Theorem $2.1$, \cite{[CC2]}]
Regular points are dense on $M_\infty$.
\end{theorem}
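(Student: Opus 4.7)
The plan is to exhibit a regular point inside every nonempty open ball of $M_\infty$. The argument will combine (i) Bishop--Gromov volume monotonicity passed to the limit, (ii) the volume-cone-implies-metric-cone theorem of Cheeger--Colding, (iii) the almost-splitting theorem, and (iv) an iteration of tangent cones.

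First I would introduce the renormalized limit measure $\nu$ on $M_\infty$, obtained as a weak limit of the normalized Riemannian measures from the $M_i^n$. A key property is that $\nu$ has full topological support, so the density statement follows once one regular point is produced in each open ball. Bishop--Gromov passed to the limit gives that $r \mapsto \nu(B_r(y))/r^m$ is nonincreasing, hence the density $\theta(y) := \lim_{r \to 0^+} \nu(B_r(y))/r^m$ exists at every $y \in M_\infty$. A short rescaling computation then shows that on any tangent cone $(M_\infty)_y$ at $y$, the induced rescaled limit measure satisfies $\nu_y(B_s)/s^m \equiv \theta(y)$, independent of $s$. By the volume-cone-implies-metric-cone theorem, $(M_\infty)_y$ is necessarily a metric cone $C(\Sigma_y)$ at \emph{every} point $y$.

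Next I would iterate. Given such $(M_\infty)_y = C(\Sigma_y)$ and a non-vertex point $z \in C(\Sigma_y)$, the radial line through $z$ is a bi-infinite geodesic, so by the almost-splitting theorem a tangent cone at $z$ inside $C(\Sigma_y)$ splits as $\mathbb{R} \times Y_1$. A diagonal Gromov--Hausdorff argument realizes this iterated tangent cone as a tangent cone at the original point $y$ in $M_\infty$, which must itself then be a metric cone, of the form $\mathbb{R} \times C(\Sigma_y^{(1)})$. Iterating on the cone factor produces at $y$ tangent cones with ever-larger Euclidean splitting; since the Hausdorff dimension is bounded by $m$, the iteration terminates after finitely many steps. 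Choosing $y$ to be a $\nu$-density point $y^*$ inside the prescribed open ball, a standard differentiation argument forces the residual cone factor to collapse, yielding a tangent cone $\mathbb{R}^{k}$ at $y^*$. A rigidity argument (volume density $\theta(y^*) = \omega_k$ together with the metric cone structure forces every tangent cone at $y^*$ of that dimension to be Euclidean) then promotes $y^*$ to a regular point.

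The main obstacle, in my view, is the rigorous realization of iterated tangent cones as genuine tangent cones at $y$: this requires a delicate double-limit / diagonal construction with quantitative Gromov--Hausdorff control across two independent scales, and is itself a substantive result of Cheeger--Colding. A secondary subtlety is ensuring the iteration terminates with a trivial cone factor at a density point --- i.e., that the irreducible residue really collapses --- which is handled by a measure-theoretic differentiation of $\theta$ combined with the rigidity in the volume-cone-implies-metric-cone theorem. Given these, density of regular points in $M_\infty$ follows since the above construction can be carried out at a $\nu$-generic point inside any prescribed open ball.
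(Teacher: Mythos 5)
The paper does not prove this statement; it is quoted as Theorem 2.1 of \cite{[CC2]}, so there is no internal proof to compare against. Judged on its own, your outline assembles plausible ingredients (volume monotonicity, volume-cone-implies-metric-cone, splitting, iteration), but it has a genuine gap at its pivot: the claim that an iterated tangent cone --- a tangent cone at a point $z$ of a tangent cone $(M_\infty)_y$ --- ``is realized as a tangent cone at the original point $y$.'' This is false in general. If $M_\infty$ is itself a metric cone with vertex $y$, then every tangent cone at $y$ is that same cone, while tangent cones at non-vertex points split off a line and are typically not isometric to it. What a diagonal argument actually yields is that iterated blow-ups belong to the larger family of pointed limits of rescalings of the $M_i$ based at \emph{varying} points converging to points near $y$; converting the extra Euclidean factor of such an iterated blow-up into the existence of an actual point $w\in M_\infty$, close to $y$, some tangent cone at which splits off $\mathbb{R}^{k+1}$, is precisely the technical heart of the Cheeger--Colding argument and is what is missing here. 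Relatedly, your closing step --- that at a $\nu$-density point the residual cone factor must ``collapse'' --- is not a correct mechanism; the argument is closed by maximality of the number of split-off lines over the given ball (if the residual factor were not a point, one could produce a nearby point gaining one more line, contradicting maximality), after which, in the noncollapsed case, volume rigidity upgrades ``some tangent cone is $\mathbb{R}^k$'' to ``every tangent cone is $\mathbb{R}^k$.''

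A second, lesser issue is that your argument is implicitly confined to the noncollapsed case, whereas the statement as quoted (and as proved in \cite{[CC2]}) allows the limit to have Hausdorff dimension $m<n$. Bishop--Gromov passed to the renormalized limit measure gives monotonicity of $\nu(B_r(y))$ normalized by the model volume in dimension $n$, not by $r^m$; the resulting density can be $0$ or $+\infty$ under collapse, and tangent cones of collapsed limits need not be metric cones at all (Cheeger--Colding give explicit examples), so step (ii) of your plan is unavailable in general. In the noncollapsed setting --- which is all this paper actually needs --- your first paragraph is essentially the correct route to ``every tangent cone is a metric cone,'' but the density of regular points still requires the point-selection and maximality argument described above rather than the density-point heuristic.
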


Let $M^n$ be a complete Riemannian manifold with nonnegative curvature, $p\in M$. The tangent cone at infinity is the Gromov-Hausdorff limit of $(M_i, p_i, g_i) = (M, p, r_i^{-2}g)$, for $r_i\to\infty$. We say $M$ is of maximal volume growth (Euclidean volume growth), if $\lim\limits_{r\to\infty}\frac{Vol(B(p, r))}{r^n}>0$.
A sequence of pointed manifolds $(M_i, p_i)$ with $Ric(M_i)\geq -(n-1)$ has noncollapsed volume, if there exists $v>0$ so that $vol(B(p_i, 1))> v$ for all $i$.

For a Lipschitz function $f$ on $M_\infty$, define a norm $||f||^2_{1, 2} = ||f||^2_{L^2}+\int_{M_\infty}|Lip f|^2$, where $$Lip(f, x) =\lim\sup\limits_{y\to x}\frac{|f(y)-f(x)|}{d(x, y)}.$$
In \cite{[Che]}, a Sobolev space $H_{1, 2}$ is defined by taking the closure of the norm $||\cdot||_{1, 2}$ for Lipschitz functions.

\emph{Condition (1)}:
$M_\infty$ satisfies the volume doubling property if for any $r>0$, $x\in M_\infty$,  $\nu_\infty(B(x, 2r))\leq 2^n\nu_\infty(B(x, r))$.

\emph{Condition (2)}:
$M_\infty$ satisfies the weak Poincare inequality if $$\int_{B(x, r)}|f - \overline{f}|^2 \leq C(n)r^2\int_{B(x, 2r)}|Lip f|^2$$ for all Lipschitz functions.
Here $\overline f$ is the average of $f$ on $B(x, r)$.

In theorem $6.7$ of \cite{[CC4]}, it was proved that if $M_\infty$ satisfies the $\nu$-rectifiability condition, condition (1) and condition (2), then there is a unique differential $df$ for $f\in H_{1, 2}$.
If $f$ is Lipschitz, $\int |Lipf|^2 = \int |df|^2$. Moreover, the $H_{1, 2}$ norm becomes an inner product. Therefore $H_{1, 2}$ is a Hilbert space. Then there exists a unique self-adjoint operator $\Delta$ on $M_\infty$ such that $$\int_{M_\infty} <df, dg> = \int _{M_\infty}<\Delta f, g>$$ for all Lipschitz functions on $M_\infty$ with compact support (Of course we can extend the functions to Sobolev spaces). See theorem $6.25$ of \cite{[CC4]}.

If $M_i\to M_{\infty}$ in the measured Gromov-Hausdorff sense and that the Ricci curvature is nonnegative for all $M_i$, then the $\nu$-rectifiability of $M_\infty$ was proved in theorem $5.5$ in \cite{[CC4]}.
By the volume comparison, Condition (1) obviously holds for $M_\infty$. Condition (2) also holds. See \cite{[X]} for a proof.

In \cite{[Di1]}\cite{[X]}, the following lemma was proved:
\begin{lemma}\label{lemma-10}
Suppose $M_i$ has nonnegative Ricci curvature and $M_i\to M_\infty$ in the measured Gromov-Hausdorff sense. Let $f_i$ be Lipschitz functions on $B(x_i, 2r)\subset M_i$ satisfying $\Delta f_i = 0$; $|f_i|\leq L, |\nabla f_i|\leq L$ for some constant $L$. Assume $x_i\to x_\infty$, $f_i\to f_\infty$ on $M_\infty$. Then $\Delta f_\infty = 0$ on $B(x_\infty, r)$.
\end{lemma}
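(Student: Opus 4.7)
The plan is to verify $\Delta f_\infty=0$ in the weak sense established after Theorem~6.25 of \cite{[CC4]}: namely, that
$$\int_{M_\infty}\langle df_\infty,d\phi\rangle\, d\nu_\infty = 0$$
for every Lipschitz function $\phi$ compactly supported in $B(x_\infty,r)$. Because $|\nabla f_i|\leq L$ uniformly, the uniform limit $f_\infty$ is $L$-Lipschitz and thus lies in $H_{1,2}$ on every ball in $M_\infty$, so by Theorem~6.7 of \cite{[CC4]} the left-hand side is well defined.

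Given such $\phi$ with $\mathrm{supp}\,\phi\subset B(x_\infty,\rho)$ for some $\rho<r$, I would first build test functions $\phi_i$ on $M_i$. Pulling back $\phi$ via the $\epsilon_i$-Gromov--Hausdorff approximations and applying a McShane extension from a fine $\epsilon_i$-net, together with a radial cutoff, produces Lipschitz $\phi_i$ on $M_i$ with $\mathrm{supp}\,\phi_i\subset B(x_i,(r+\rho)/2)$, $\phi_i\to\phi$ uniformly, and $\mathrm{Lip}(\phi_i)\leq\mathrm{Lip}(\phi)+o(1)$. Since $\Delta f_i=0$ on $B(x_i,2r)$ and $\phi_i$ is compactly supported inside this ball, integration by parts on $M_i$ gives
$$\int_{M_i}\langle\nabla f_i,\nabla\phi_i\rangle\, dV_i=-\int_{M_i}\phi_i\,\Delta f_i\, dV_i=0.$$

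The main obstacle is then passing this identity to the limit, since neither $\nabla f_i$ nor $\nabla\phi_i$ has a pointwise meaning on $M_\infty$. The idea is to use the polarization identity
$$2\langle\nabla f_i,\nabla\phi_i\rangle=|\nabla(f_i+\phi_i)|^2-|\nabla f_i|^2-|\nabla\phi_i|^2$$
and to show that each of the three Dirichlet energies converges to its limit analogue on $M_\infty$. The lower-semicontinuity bound $\int_{M_\infty}|dh_\infty|^2\leq\liminf_i\int_{M_i}|\nabla h_i|^2$ for uniformly Lipschitz sequences is built into the construction of the Cheeger energy in \cite{[Che]}. The matching upper bound is the technical core: it uses the harmonicity of $f_i$ (so that $|\nabla f_i|^2$ enjoys a Bochner-type sub-mean-value inequality) combined with the segment inequality and volume convergence of Cheeger--Colding to conclude $\int_{B(x_i,\rho')}|\nabla f_i|^2\to\int_{B(x_\infty,\rho')}|df_\infty|^2$ for a dense set of radii $\rho'$, with analogous, and in fact easier, statements for $\phi_i$ and $f_i+\phi_i$ that follow directly from the construction of $\phi_i$. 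Combining these three convergences yields the variational identity, and hence $\Delta f_\infty=0$ on $B(x_\infty,r)$.
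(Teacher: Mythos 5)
The paper does not prove this lemma at all -- it is quoted from \cite{[Di1]} and \cite{[X]}, where the argument runs through convergence of heat kernels under measured Gromov--Hausdorff convergence rather than a direct variational argument -- so your proposal has to stand on its own, and as written it has two genuine gaps. The decisive one is the treatment of the transferred test functions $\phi_i$. A McShane extension of the pullback of $\phi$ from an $\epsilon_i$-net only controls the \emph{global} Lipschitz constant, so all you get is $\limsup_i\int|\nabla\phi_i|^2\leq(\mathrm{Lip}\,\phi)^2\,\mathrm{vol}(\mathrm{supp}\,\phi_i)$, which can be far larger than $\int|d\phi|^2$ (think of $\phi$ flat on most of its support but with Lipschitz constant $1$ somewhere). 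The statement you actually need -- that the construction is a recovery sequence, $\int|\nabla\phi_i|^2\to\int|d\phi|^2$ -- is a Mosco-type lim sup inequality for Cheeger energies along the convergence of spaces; it is true for noncollapsed Ricci limits but is a nontrivial theorem, not something that ``follows directly from the construction.'' (Your claim that the $f_i+\phi_i$ energy is easier can be repaired: granted lower semicontinuity and energy convergence of $f_i$ and $\phi_i$ separately, the parallelogram identity forces energy convergence of $f_i\pm\phi_i$; but the reason you give is not the right one, and the input for $\phi_i$ is exactly what is missing.)

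The second gap is that the asserted convergence $\int_{B(x_i,\rho')}|\nabla f_i|^2\to\int_{B(x_\infty,\rho')}|df_\infty|^2$ for the harmonic functions, which you correctly identify as the technical core, is essentially the content of the cited results of Ding and Xu (and of later $W^{1,2}$-convergence theorems for harmonic functions on Ricci limits) in lightly disguised form. The ingredients you name point in a reasonable direction -- Bochner with $Ric\geq0$ does give local $L^2$ Hessian bounds and $|\nabla f_i|^2$ is subharmonic, and the segment inequality plus volume convergence is how directional-derivative convergence is usually extracted -- but no actual argument is given, so the proposal defers the heart of the lemma rather than proving it. A smaller inaccuracy: lower semicontinuity of the Dirichlet energy is not ``built into'' \cite{[Che]}, which concerns a single metric measure space; across a varying sequence of spaces it needs its own (standard, but separate) proof. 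In summary, the variational strategy is legitimate and different from the heat-kernel route of \cite{[Di1]}, but with the recovery-sequence step unproved and the energy convergence of the $f_i$ only gestured at, the proof is incomplete.
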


\bigskip
\section{\bf{Proof of the main theorem}}
\begin{proof}
Through out the proof, we will denote by $\Phi(u_1,..., u_k|....)$ any nonnegative functions depending on $u_1,..., u_k$ and some additional parameters such that when these parameters are fixed, $$\lim\limits_{u_1,..., u_k\to 0}\Phi(u_1,..., u_k|...) = 0.$$ According to theorem \ref{cc}, $(X_q, q)$ is a metric cone. Say $(X_q, q) = (\Sigma\times_{r^2}\mathbb{R}^+, q)$. We may assume $(N_i^n, q_i)$ pointed converges in the Gromov-Hausdorff sense to $(X_q, q)$, where $(N_i^n, q_i)$ are K\"ahler, with $Ric(N_i)\geq-\frac{1}{i}$ and the volume is noncollapsed. Consider the geodesic annulus $A_i=B(q_i, 10)\backslash B(q_i, \frac{1}{3})$. By results in \cite{[CC1]} (explicitly, Corollary $4.42$,  Lemma $4.45$, Proposition $4.50$, Proposition $4.82$), there exist smooth functions $\rho_i$ on $N_i$ so that 
\begin{equation}\label{1}
\int_{A_i}|\nabla\rho_i-\nabla \frac{1}{2}r_i^2|^2 + |\nabla^2\rho_i-g_i|^2<\Phi(\frac{1}{i});
\end{equation}
\begin{equation}\label{2}
 |\nabla\rho_i|\leq C_1(n), |\rho_i-\frac{r_i^2}{2}|<\Phi(\frac{1}{i})
 \end{equation}  in $A_i$. Define \begin{equation}\label{3}X_i = J\nabla\rho_i\end{equation} and let $\sigma_{i, t}$ be the 
diffeomorphism generated by $X_i$. Clearly, $\sigma_{i, t}$ preserves the level set of $\rho_i$. 
Let $x\in B(q_i, 7)\backslash B(q_i, 4)$. Define functions \begin{equation}\label{4}F(x, t) = \int_{B(x, \frac{1}{2})}\int_{B(x, \frac{1}{2})}|d(y, z) - d(\sigma_{i, t}(y), \sigma_{i, t}(z))|^2d\sigma_{i, t}^*vol(y)d\sigma_{i, t}^*vol(z),\end{equation} \begin{equation}\label{5}G(x, t) = \int_{B(x, \frac{1}{2})}\int_{B(x, \frac{1}{2})}|d(y, z) - d(\sigma_{i, t}(y), \sigma_{i, t}(z))|^2dvol(y)dvol(z).\end{equation}
Here $\sigma_{i, t}^*vol$ is the pull back volume form. $|t|\leq \frac{1}{10C_1(n)}$, where $C_1(n)$ appears in (\ref{2}).
\begin{claim}\label{cl1}
For $|t|\leq \frac{1}{10C_1(n)}$ and $y, z\in B(x, \frac{1}{2})$, $\sigma_{i, t}(y)\in B(x, \frac{2}{3})$. In particular, if $l$ is a shortest geodesic connecting $\sigma_{i, t}(y)$ and $\sigma_{i, t}(z)$, then $l\subset B(x, 2)\subset A_i$.
\end{claim}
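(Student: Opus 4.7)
The claim is essentially a quantitative containment statement, and the proof should be fairly direct once we exploit the bound $|\nabla\rho_i|\leq C_1(n)$ from (\ref{2}) together with the fact that $J$ is an isometry on tangent spaces.

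My plan is as follows. First I would observe that since $(N_i, J, g_i)$ is K\"ahler, the complex structure $J$ is a pointwise isometry on tangent spaces, so
\[
|X_i| = |J\nabla\rho_i| = |\nabla\rho_i| \leq C_1(n)
\]
on $A_i$. In particular, any integral curve $t\mapsto \sigma_{i,t}(y)$ of $X_i$ is $C_1(n)$-Lipschitz in $t$. Before using this I would briefly justify that the flow is well-defined: since $\sigma_{i,t}$ preserves the level sets of $\rho_i$ (because $X_i = J\nabla\rho_i$ is $g_i$-orthogonal to $\nabla\rho_i$, so $X_i(\rho_i)=0$), and (\ref{2}) gives $\rho_i \approx r_i^2/2$ on $A_i$, the trajectory starting from $y\in B(x,\tfrac12)\subset A_i$ stays on a level set well inside $A_i$ for the time range in question.

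Next I would apply the speed bound to conclude that for $|t|\leq \frac{1}{10C_1(n)}$ and $y\in B(x,\tfrac12)$,
\[
d(y,\sigma_{i,t}(y)) \leq \int_0^{|t|} |X_i(\sigma_{i,s}(y))|\, ds \leq C_1(n)|t| \leq \tfrac{1}{10}.
\]
The triangle inequality then gives $d(x,\sigma_{i,t}(y)) \leq \tfrac12 + \tfrac{1}{10} < \tfrac23$, which is the first assertion.

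For the second assertion, if $l$ is a shortest geodesic between $\sigma_{i,t}(y)$ and $\sigma_{i,t}(z)$, then its length equals
\[
d(\sigma_{i,t}(y),\sigma_{i,t}(z)) \leq d(\sigma_{i,t}(y),x)+d(x,\sigma_{i,t}(z)) \leq \tfrac23+\tfrac23 = \tfrac43,
\]
so $l \subset \overline{B(\sigma_{i,t}(y), \tfrac43)} \subset B(x, 2)$. Finally, since $x\in B(q_i,7)\setminus B(q_i,4)$, the triangle inequality gives $B(x,2)\subset B(q_i,9)\setminus B(q_i,2)\subset A_i$, completing the proof. There is no real obstacle here; the only thing to keep track of carefully is that the flow stays in the region where (\ref{2}) is valid, which is guaranteed by the level-set preservation of $\sigma_{i,t}$.
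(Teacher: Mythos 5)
Your proposal is correct and follows essentially the same route as the paper: bound the flow speed by $|X_i| = |\nabla\rho_i| \leq C_1(n)$, integrate to get $d(y,\sigma_{i,t}(y)) \leq \tfrac{1}{10}$, and conclude both assertions by the triangle inequality. Your extra remark that level-set preservation of $\rho_i$ keeps the trajectory inside $A_i$ is a sensible (and harmless) addition that the paper handles with the same observation stated just before the claim.
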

\begin{proof}
We have $|\frac{d\sigma_{i, t}(y)}{dt}| = |X_i(\sigma_{i, t}(y))|\leq C_1(n)$. By a direct integration, we obtain the first conclusion. The second one follows from the triangle inequality.
\end{proof}
\begin{claim}\label{cl2}
Given $t, y, z$ as in the last claim, let $l$ be the shortest geodesic connecting $\sigma_{i, t}(y)$ and $\sigma_{i, t}(z)$. Suppose $\sigma_{i, t}(y)$ is not on the cut locus of $\sigma_{i, t}(z)$, then $\frac{d(d(\sigma_{i, t}(y), \sigma_{i, t}(z)))}{dt} = \int_{l}\langle\nabla_eX_i, e\rangle ds$, where $e$ is the unit tangent vector of $l$.
\end{claim}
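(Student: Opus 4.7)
The plan is to apply a first variation argument, bypassing the non-smoothness of the distance function via the cut-locus hypothesis. Fix $t$ in the allowed range and let $l : [0,L] \to N_i$ denote the minimizing geodesic from $\sigma_{i,t}(y)$ to $\sigma_{i,t}(z)$ parametrized by arc length, with unit tangent $e = l'$ and $L = d(\sigma_{i,t}(y), \sigma_{i,t}(z))$. I would introduce the test family
\[
\eta_s(u) := \sigma_{i, s-t}(l(u)), \qquad u \in [0, L],
\]
which connects $\sigma_{i,s}(y)$ to $\sigma_{i,s}(z)$ for $s$ close to $t$. By minimality of the distance, $L(\eta_s) \geq d(\sigma_{i,s}(y), \sigma_{i,s}(z)) =: \ell(s)$, with equality at $s = t$. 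The hypothesis that $\sigma_{i,t}(y)$ is not on the cut locus of $\sigma_{i,t}(z)$ implies that $d$ is smooth in a neighborhood of $(\sigma_{i,t}(y), \sigma_{i,t}(z))$ in $N_i \times N_i$, and so $\ell(s)$ is smooth at $s = t$. The nonnegative function $L(\eta_s) - \ell(s)$ vanishes at $s = t$ and thus attains its minimum there; its derivative at $t$ vanishes, forcing $\ell'(t) = \frac{d}{ds}\big|_{s=t} L(\eta_s)$.

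It then remains to compute $\frac{d}{ds}\big|_{s=t} L(\eta_s)$. Writing $L(\eta_s) = \int_0^L |d\sigma_{i,s-t}(e(u))|_g\, du$ and using the standard identity $\frac{d}{ds}\big|_{s=t}(\sigma_{i,s-t}^* g) = \mathcal{L}_{X_i} g$ together with $(\mathcal{L}_{X_i} g)(v,v) = 2\langle \nabla_v X_i, v\rangle$, and noting that $|e(u)| = 1$, differentiation under the integral sign yields
\[
\frac{d}{ds}\bigg|_{s=t} L(\eta_s) = \int_0^L \langle \nabla_{e(u)} X_i, e(u)\rangle\, du = \int_l \langle \nabla_e X_i, e\rangle\, ds,
\]
which is exactly the claimed formula. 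Equivalently, one could invoke the classical first variation of arc length directly with variation field $X_i \circ l$ to obtain the boundary-term expression $\langle X_i, e\rangle\big|_0^L$, and then integrate by parts using $\nabla_e e = 0$ to recover the same integral form.

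The only mildly delicate point is the smoothness of $\ell$ near $s = t$, for which the non-cut-locus hypothesis in the statement is precisely what is needed; once that is in hand, no further obstacle arises, and the argument reduces to a routine computation of how length changes under the flow of $X_i$.
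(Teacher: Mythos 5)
Your argument is correct and is essentially the paper's own route: the paper disposes of this claim by citing the first variation of arc length, and your flowed-curve comparison combined with the Lie-derivative computation (or, as you note at the end, the classical first variation plus integration along the geodesic using $\nabla_e e=0$) is exactly that argument written out, with the non-cut-locus hypothesis correctly used to guarantee smoothness of $\ell(s)$ at $s=t$. No gaps worth flagging.
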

\begin{proof}
This is an easy consequence of the first variation of arc length.
\end{proof}
From the definition of Lie derivative, we have $$\frac{d\sigma_{i, t}^*vol(y)}{dt}=div X_i(\sigma_{i, t}(y))\sigma_{i, t}^*vol(y) = \sum\limits_k\langle\nabla_{e_k}X_i, e_k\rangle\sigma_{i, t}^*vol(y),$$ where $e_k$ is an orthonormal frame at $\sigma_{i, t}(y)$. 
\begin{claim}\label{cl3}
Define a symmetric tensor $T_{jl} = \langle\nabla_{e_j}X_i, e_l\rangle +\langle\nabla_{e_l}X_i, e_j\rangle$. Then $\int_{A_i}|T_{jl}|\leq\Phi(\frac{1}{i})$. In particular, $\int_{A_i}|div X_i|\leq \Phi(\frac{1}{i})$.
\end{claim}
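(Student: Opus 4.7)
\medskip

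\textbf{Proof proposal for Claim \ref{cl3}.} The key point is that on a K\"ahler manifold the complex structure $J$ is parallel, which lets us move the derivative of $X_i = J\nabla\rho_i$ inside $J$ and relate $\nabla X_i$ to the Hessian of $\rho_i$.

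First I would compute, using $\nabla J = 0$,
\[
\langle\nabla_{e_j}X_i, e_l\rangle = \langle J\nabla_{e_j}\nabla\rho_i, e_l\rangle = \langle J H(e_j), e_l\rangle,
\]
where $H = \nabla^2\rho_i$ is the Hessian, viewed as a symmetric endomorphism. Consequently
\[
T_{jl} = \langle J H(e_j), e_l\rangle + \langle J H(e_l), e_j\rangle.
\]
The crucial cancellation comes from testing against $H = \mathrm{Id}$: because $J$ is skew-adjoint, $\langle J e_j, e_l\rangle + \langle J e_l, e_j\rangle = 0$. Therefore
\[
T_{jl} = \langle J(H - \mathrm{Id})(e_j), e_l\rangle + \langle J(H - \mathrm{Id})(e_l), e_j\rangle,
\]
which in any orthonormal frame yields the pointwise bound
\[
|T_{jl}| \leq 2\, |\nabla^2\rho_i - g_i|.
\]

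Next I would integrate over $A_i$ and apply Cauchy--Schwarz, using that by Bishop--Gromov volume comparison (with $Ric(N_i) \geq -\frac{1}{i}$ and the noncollapsing hypothesis) $\mathrm{vol}(A_i)$ is bounded by a constant $C(n)$ independent of $i$:
\[
\int_{A_i} |T_{jl}| \leq 2\,\mathrm{vol}(A_i)^{1/2}\left(\int_{A_i} |\nabla^2\rho_i - g_i|^2\right)^{1/2} \leq 2\, C(n)^{1/2}\,\Phi(\tfrac{1}{i})^{1/2},
\]
by hypothesis (\ref{1}). The right-hand side is of the form $\Phi(\tfrac{1}{i})$, which gives the first assertion.

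For the consequence on $\mathrm{div}\,X_i$, I would simply observe that
\[
\mathrm{div}\,X_i = \sum_k \langle\nabla_{e_k}X_i, e_k\rangle = \tfrac{1}{2}\sum_k T_{kk},
\]
so the pointwise bound $|\mathrm{div}\,X_i| \leq \tfrac{1}{2}\sum_k |T_{kk}|$ and the integral estimate just proved immediately yield $\int_{A_i}|\mathrm{div}\,X_i| \leq \Phi(\tfrac{1}{i})$. There is no real obstacle here; the only point requiring care is keeping track of the K\"ahler identity $\nabla J = 0$ and recognizing the antisymmetry of $J$ as the reason $T$ vanishes to first order when $\nabla^2\rho_i$ is close to the metric.
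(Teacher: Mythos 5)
Your proof is correct and follows essentially the same route as the paper, whose one-line argument ("$J$ is parallel, (\ref{1}), (\ref{3}) and Cauchy--Schwarz") you have simply written out in detail: the skew-adjointness of $J$ cancels the identity part of the Hessian, leaving $|T_{jl}|\leq 2|\nabla^2\rho_i-g_i|$, and Cauchy--Schwarz plus the volume bound converts the $L^2$ estimate (\ref{1}) into the $L^1$ bound. Nothing to add.
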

\begin{proof}
As $J$ is parallel, the proof follows from (\ref{1}), (\ref{3}) and the Cauchy-Schwarz inequality.
\end{proof}
\begin{claim}\label{cl4}
Let $\Omega$ be any measurable subset of $B(x, \frac{1}{2})$, then $|\sigma_{i, t}^*vol(\Omega)-vol(\Omega)|<\Phi(\frac{1}{i})$ for $|t|\leq \frac{1}{10C_1(n)}$.
\end{claim}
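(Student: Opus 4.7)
The plan is to reduce the claim to the divergence estimate from Claim \ref{cl3} via the standard Lie derivative formula for the pullback volume form. Since $\sigma_{i,t}$ is the flow of $X_i$, the volume form evolves as
$$\frac{d}{dt}\sigma_{i,t}^*vol = \sigma_{i,t}^*(\mathcal{L}_{X_i}\, vol) = \sigma_{i,t}^*\bigl((\mathrm{div}\,X_i)\,vol\bigr).$$
Integrating this identity over $\Omega$ and using the change of variables $z=\sigma_{i,t}(y)$ gives
$$\frac{d}{dt}\,\sigma_{i,t}^*vol(\Omega) \;=\; \int_\Omega (\mathrm{div}\,X_i)(\sigma_{i,t}(y))\,d\sigma_{i,t}^*vol(y) \;=\; \int_{\sigma_{i,t}(\Omega)}\mathrm{div}\,X_i\;dvol.$$
The crucial point is that I integrate against the pullback measure on the left; this avoids any need for a pointwise Jacobian bound and converts everything into a plain integral of $\mathrm{div}\,X_i$ over $\sigma_{i,t}(\Omega)$.

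Next I would invoke Claim \ref{cl1}: for $|t|\leq\frac{1}{10C_1(n)}$ and $\Omega\subset B(x,\tfrac12)$, the image $\sigma_{i,t}(\Omega)$ is contained in $B(x,\tfrac23)\subset A_i$. Therefore
$$\Bigl|\tfrac{d}{dt}\sigma_{i,t}^*vol(\Omega)\Bigr| \;\leq\; \int_{A_i}|\mathrm{div}\,X_i|\,dvol,$$
and the right-hand side is $\Phi(\tfrac1i)$ by Claim \ref{cl3}. Integrating in $t$ from $0$ to the prescribed time and noting that $\sigma_{i,0}^*vol=vol$ yields
$$|\sigma_{i,t}^*vol(\Omega)-vol(\Omega)| \;\leq\; |t|\cdot\Phi(\tfrac1i) \;\leq\; \Phi(\tfrac1i),$$
which is the claim.

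The argument is essentially a one-step computation, so I do not expect a substantive obstacle. The only thing to be careful about is the bookkeeping of measures: one should write the time derivative as an integral with respect to $\sigma_{i,t}^*vol$ (rather than $vol$) so that after changing variables it becomes $\int_{\sigma_{i,t}(\Omega)}\mathrm{div}\,X_i\,dvol$; otherwise one is forced to control the Jacobian $|\det d\sigma_{i,t}|$ pointwise, which is exactly what we are trying to prove. Provided this is done correctly, the containment $\sigma_{i,t}(\Omega)\subset A_i$ from Claim \ref{cl1} and the $L^1$ divergence bound from Claim \ref{cl3} finish the proof immediately.
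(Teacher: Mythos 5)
Your proof is correct and follows the same route as the paper: differentiate $\sigma_{i,t}^*vol(\Omega)$ in $t$ using the Lie derivative identity, change variables so the integrand becomes $\mathrm{div}\,X_i$ over $\sigma_{i,t}(\Omega)\subset A_i$ (Claim \ref{cl1}), bound it by the $L^1$ estimate of Claim \ref{cl3}, and integrate in time. The care you take about integrating against the pullback measure is exactly the bookkeeping in the paper's displayed chain of inequalities.
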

\begin{proof}
By claim \ref{cl1}, $\sigma_{i, t}(B(x, \frac{1}{2}))\subset A_i$.
\begin{equation}
\begin{aligned}
&|\sigma_{i, t}^*vol(\Omega)-vol(\Omega)| = |\int_0^t\frac{d\sigma_{i, s}^*vol(\Omega)}{ds}ds|\leq \int_0^t|\frac{d\sigma_{i, s}^*vol(\Omega)}{ds}|ds \\&\leq\int_0^t\int_\Omega|div X_i(\sigma_{i, s}(y))|\sigma_{i, s}^*vol(y)ds\leq \int_0^t\int_{A_i}|div X_i(y)|dyds \leq t\Phi(\frac{1}{i}).
\end{aligned}
\end{equation}
\end{proof}
Recall the segment inequality of Cheeger-Colding \cite{[CC1]}:
\begin{prop}
Let $(Y^m, g)$ be a Riemannian manifold with $Ric\geq -(m-1)g$.  Let $A_1, A_2\subset Y^m$ be open sets such that any minimal geodesic joining $A_1, A_2$ is contained in an open set $W$. Let $D = \sup\limits_{y_1\in A_1, y_2\in A_2}d(y_1, y_2)$ and $e$ be a nonnegative function defined on $W$. Then $\int_{A_1\times A_2}\int_{0}^{\overline{y_1, y_2}}e(\gamma_{\overline{y_1, y_2}})ds \leq C(m, D)(vol(A_1)+vol(A_2))\int_W e$. Here $\gamma_{\overline{y_1, y_2}}$ is a minimal geodesic connecting $y_1, y_2$; $C(m, D)$ is a positive constant depending only on $m, D$.
\end{prop}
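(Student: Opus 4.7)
The plan is to split the geodesic $\gamma_{y_1 y_2}$ at its midpoint and use polar coordinates centered at the endpoint opposite to the half under consideration, exploiting Bishop-Gromov to bound the radial area element on short annuli. Writing
\[
\int_0^{d(y_1,y_2)} e(\gamma(s))\,ds = F_1(y_1,y_2) + F_2(y_1,y_2),
\]
where $F_1 = \int_0^{d/2}e(\gamma)\,ds$ is the half nearer $y_1$ and $F_2 = \int_{d/2}^d e(\gamma)\,ds$ the half nearer $y_2$, the goal will be $\int_{A_1\times A_2} F_1 \le C(m,D)\,vol(A_2)\int_W e$. The symmetric bound on $F_2$, obtained by swapping $y_1$ and $y_2$, contributes the $vol(A_1)\int_W e$ term, and the sum is the stated inequality.

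For the bound on $\int F_1$, I would fix $y_2 \in A_2$ and parameterize $y_1$ by $y_1 = \exp_{y_2}(rv)$ with $r = d(y_1,y_2) \le D$ and $v \in S^{m-1}$, so that $dvol(y_1) = \mathcal{A}(r,v)\,dr\,d\sigma(v)$ where $\mathcal{A}$ is the Jacobian of $\exp_{y_2}$. After the substitution $u = r-s$, $F_1$ becomes $\int_{r/2}^r e(\exp_{y_2}(uv))\,du$, and a Fubini exchange (for fixed $u$, the variable $r$ ranges over $[u,\min(2u,D)]$) gives
\[
\int_{A_1} F_1\,dvol(y_1) \le \int_{S^{m-1}}\int_0^D e(\exp_{y_2}(uv))\int_u^{\min(2u,D)} \chi_{A_1}(\exp_{y_2}(rv))\,\mathcal{A}(r,v)\,dr\,du\,d\sigma(v).
\]
The key Bishop-Gromov input is the pointwise Jacobian comparison $\mathcal{A}(r,v) \le C(m,D)\,\mathcal{A}(u,v)$ for $u \le r \le 2u \le 2D$: the ratio $\mathcal{A}(r,v)/\sinh^{m-1}(r)$ is non-increasing in $r$, and since $\sinh(2u)/\sinh(u) = 2\cosh u \le 2\cosh D$, the ratio of $\sinh^{m-1}$'s is bounded by $(2\cosh D)^{m-1}$ on $[u,2u]$. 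This dominates the inner $dr$-integral by $C(m,D)\,u\,\mathcal{A}(u,v)$, and the factor $u \le D$ is absorbed into the constant.

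Plugging this back in, the integrand becomes $C(m,D)\,e(\exp_{y_2}(uv))\,\mathcal{A}(u,v)\,du\,d\sigma(v)$, which is exactly $C(m,D)\,e\,dvol$ in polar at $y_2$. Whenever the indicator $\chi_{A_1}(\exp_{y_2}(rv))$ is nonzero for some $r \in [u,2u]$, the point $\exp_{y_2}(uv)$ lies on a minimal geodesic from $y_2 \in A_2$ to some point of $A_1$, hence lies in $W$ by hypothesis. Therefore $\int_{A_1} F_1\,dvol(y_1) \le C(m,D)\int_W e$, and integrating over $y_2 \in A_2$ produces the $vol(A_2)$ factor. The main technical obstacle is that the polar parameterization at $y_2$ is only a diffeomorphism onto the injectivity domain of $\exp_{y_2}$, and the minimal geodesic need not be unique when $y_1$ is a cut point of $y_2$; these issues are handled by observing that the cut locus has measure zero, that minimal geodesics are almost-everywhere unique (so a measurable selection exists), and that the Bishop-Gromov area comparison continues to dominate the pullback volume element past cut points, so all estimates survive in the integrated form required.
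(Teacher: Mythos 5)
Your proof is correct. The paper itself gives no argument for this proposition---it is simply quoted from Cheeger--Colding \cite{[CC1]}---and what you have written is a faithful reconstruction of the standard proof from that source: the split of the geodesic at its midpoint, polar coordinates at the far endpoint, the Fubini exchange, and the infinitesimal Bishop--Gromov comparison $\mathcal{A}(r,v)\le (2\cosh D)^{m-1}\mathcal{A}(u,v)$ for $u\le r\le 2u$ are exactly the ingredients of the original argument, and your handling of the cut locus (setting $\mathcal{A}=0$ past the cut point, discarding the measure-zero set of non-unique minimal geodesics) disposes of the only genuinely delicate point.
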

Now we define a function \begin{equation}\label{6}u(y, z, t) = |d(y, z) - d(\sigma_{i, t}(y), \sigma_{i, t}(z))|.\end{equation} 
\begin{prop}\label{thm0}
In (\ref{5}), $G(x, t)\leq \Phi(\frac{1}{i})$ for $|t|\leq \frac{1}{10C_1(n)}$. 
\end{prop}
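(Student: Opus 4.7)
The plan is to reduce $G(x,t)$ to an integral amenable to the Cheeger--Colding segment inequality. The first ingredient is that the symmetric tensor $T$ from Claim~\ref{cl3} is small in $L^2(A_i)$: since $J$ is parallel, $\langle\nabla_{e_j}(J\nabla\rho_i),e_l\rangle = -(\nabla^2\rho_i)(e_j, Je_l)$, and the Kähler identity $g(e_j, Je_l) + g(e_l, Je_j) = 0$ implies
$$T_{jl} = -(\nabla^2\rho_i - g_i)(e_j, Je_l) - (\nabla^2\rho_i - g_i)(e_l, Je_j),$$
so $|T|^2 \leq C|\nabla^2\rho_i - g_i|^2$ pointwise, and $\int_{A_i}|T|^2 \leq \Phi(\tfrac{1}{i})$ by (\ref{1}).

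Next I obtain a pointwise bound on $u^2$. For fixed $y, z\in B(x,1/2)$, set $\phi(s) := d(\sigma_{i,s}(y),\sigma_{i,s}(z))$, which is Lipschitz in $s$ since $|X_i|\leq C_1(n)$ and hence differentiable almost everywhere. At points of differentiability (the complementary cut-locus set in $s$ has measure zero), Claim~\ref{cl2} together with $\langle\nabla_e X_i,e\rangle = \tfrac12 T(e,e)$ gives $|\phi'(s)|\leq \tfrac12\int_{l_s(y,z)}|T|\,d\tau$. Since $l_s\subset B(x,2)$ has length $\leq 4/3$, Cauchy--Schwarz yields $|\phi'(s)|^2\leq \tfrac13\int_{l_s}|T|^2\,d\tau$, and integrating once more,
$$u(y,z,t)^2 \leq t\int_0^t|\phi'(s)|^2\,ds \leq \frac{t}{3}\int_0^t\int_{l_s(y,z)}|T|^2\,d\tau\,ds.$$

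Now I integrate this bound against $d\sigma^*_{i,s}vol \otimes d\sigma^*_{i,s}vol$ and apply the segment inequality. For each fixed $s$, the substitution $\tilde y=\sigma_{i,s}(y)$, $\tilde z=\sigma_{i,s}(z)$ converts $d\sigma^*_{i,s}vol(y)$ to $dvol(\tilde y)$ on $\sigma_{i,s}(B(x,1/2))\subset B(x, 2/3)$, and $l_s(y,z)$ becomes the actual minimizing geodesic $l(\tilde y,\tilde z)\subset B(x,2)\subset A_i$ by Claim~\ref{cl1}. The segment inequality with $A_1=A_2=B(x,2/3)$, $W=B(x,2)$, $e=|T|^2$ then gives
$$\int_{B(x,1/2)^2}\int_{l_s}|T|^2\,d\tau\,d\sigma^*_{i,s}vol(y)\,d\sigma^*_{i,s}vol(z) \leq C\cdot vol(B(x,2/3))\cdot\int_{A_i}|T|^2 \leq \Phi(\tfrac{1}{i}),$$
so the analogue of $G$ with the time-varying pullback measure is $\leq \Phi(\tfrac{1}{i})$ after integrating in $s$.

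The main obstacle, and the last step, is converting from the pullback measure back to $dvol$. Claim~\ref{cl4} gives total-variation control $\|d\sigma^*_{i,s}vol - dvol\|_{TV}\leq \Phi(\tfrac{1}{i})$ on $B(x,1/2)$, which would suffice if the integrand $\int_{l_s}|T|^2\,d\tau$ were uniformly bounded; it is not. The resolution is to use the a priori pointwise bound $u^2\leq (2tC_1(n))^2$: since $u^2$ itself is uniformly bounded, Claim~\ref{cl4} applied at the level of $\int u^2\,d\mu$ directly yields $|G(x,t) - F(x,t)|\leq \Phi(\tfrac{1}{i})$, and a parallel argument (again using boundedness of $u^2$ to replace $d\sigma^*_{i,t}vol$ by the time-varying $d\sigma^*_{i,s}vol$ inside $F$) reduces $F(x,t)$ to the quantity controlled in the previous paragraph, yielding $G(x,t)\leq \Phi(\tfrac{1}{i})$.
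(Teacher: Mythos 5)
Most of your reduction is sound and close in spirit to the paper's: the $L^2$ smallness of $T$ follows from (\ref{1}) exactly as you say, the first-variation bound $|\phi'(s)|\le\frac12\int_{l_s}|T|$ is Claim \ref{cl2}, and for each \emph{fixed} $s$ the change of variables $\tilde y=\sigma_{i,s}(y)$, $\tilde z=\sigma_{i,s}(z)$ does make the segment inequality applicable to $\int\int\int_{l_s}|T|^2\,d\sigma_{i,s}^*vol\,d\sigma_{i,s}^*vol$. Your very last step, however, has a genuine gap, and it is exactly at the obstacle you yourself flagged. After you majorize $u^2(y,z,t)$ by $\frac t3\int_0^t\int_{l_s(y,z)}|T|^2\,d\tau\,ds$, the boundedness of $u^2$ is no longer available: to compare $F(x,t)=\int\int u^2\,d\sigma_{i,t}^*vol\,d\sigma_{i,t}^*vol$ (or $G$) with the quantity you actually controlled, namely $\int_0^t\bigl[\int\int\int_{l_s}|T|^2\,d\sigma_{i,s}^*vol\,d\sigma_{i,s}^*vol\bigr]ds$, you must swap the time-$t$ (or plain) measure for the time-$s$ measure \emph{under the unbounded integrand} $\int_{l_s}|T|^2\,d\tau$, and the total-variation estimate of Claim \ref{cl4} gives nothing there; nor does $u^2\le 9$ help, since $u^2$ has already been replaced by its unbounded majorant. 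A Chebyshev split does not rescue it either: on the bad set where $\int_{l_s}|T|^2$ is large you have no $vol\times vol$ control of its integral. So the chain "$G\approx F\approx$ controlled quantity" breaks at the second step, and only the harmless first step ($|G-F|\le\Phi(\frac1i)$, using $u^2\le 9$) survives.

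The fix is the dynamic argument the paper uses: instead of a static pointwise bound integrated in $s$, differentiate $F(x,t)$ in $t$. Then at each time the flow, the minimizing geodesic, and the pullback measures are all evaluated at the \emph{same} time $t$, so the first-variation term $6\int\int\int_{l_t}|\langle\nabla_eX_i,e\rangle|\,d\sigma_{i,t}^*vol\,d\sigma_{i,t}^*vol$ becomes, after the change of variables, exactly a segment-inequality quantity with $e=|T|$ (only the $L^1$ bound of Claim \ref{cl3} is needed), while the terms coming from differentiating the measures carry the bounded factor $u^2\le 9$ and are controlled by $\int_{A_i}|\operatorname{div}X_i|\le\Phi(\frac1i)$. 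Integrating from $F(x,0)=0$ gives $F(x,t)\le\Phi(\frac1i)$, and then your total-variation (or the paper's bad-set) argument legitimately converts this into $G(x,t)\le\Phi(\frac1i)$, because there the integrand really is the bounded function $u^2$.
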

\begin{proof}
By triangle inequality, $u(y, z, t)\leq 3$.
\begin{equation}
\begin{aligned}
|\frac{dF(x, t)}{dt}|&\leq6\int_{B(x, \frac{1}{2})}\int_{B(x, \frac{1}{2})}\int_{l}|\langle\nabla_eX_i, e\rangle| dsd\sigma_{i, t}^*vol(y)d\sigma_{i, t}^*vol(z)\\&+
9\int_{B(x, \frac{1}{2})}\int_{B(x, \frac{1}{2})}|div X_i(\sigma_{i, t}(y))|d\sigma_{i, t}^*vol(y)d\sigma_{i, t}^*vol(z)\\&+9\int_{B(x, \frac{1}{2})}\int_{B(x, \frac{1}{2})}|div X_i(\sigma_{i, t}(z))|d\sigma_{i, t}^*vol(y)d\sigma_{i, t}^*vol(z)\\&\leq C(n)(vol(A_i)\int_{A_i}|T_{jl}| + vol(A_i)\int_{A_i}|div X_i|)\\&\leq \Phi(\frac{1}{i}).\end{aligned}
\end{equation}
In the inequality above, we applied claim \ref{cl1}, claim \ref{cl2} and claim \ref{cl3} and proposition $1$. As $F(x, 0) = 0$, we obtain that \begin{equation}\label{7}F(x, t)\leq \Phi(\frac{1}{i}).\end{equation}
Given $a>0$, let $$E= \{(y, z)\in B(x, \frac{1}{2})\times B(x, \frac{1}{2})|u^2(y, z, t)\geq \sqrt a\}.$$ According to (\ref{4}) and (\ref{7}), when $i$ is large, $$(\sigma_{i, t}^*vol\times \sigma_{i, t}^*vol)(E)\leq \sqrt a.$$ By using the same argument as in claim \ref{cl4}, we find $$|(\sigma_{i, t}^*vol\times \sigma_{i, t}^*vol)(E)-(vol\times vol)(E)|\leq \Phi(\frac{1}{i}).$$
Therefore, \begin{equation}\label{eq-31}\begin{aligned}G(x, t)&= \int_E u^2(y, z, t) +\int_{B(x, \frac{1}{2})\times B(x, \frac{1}{2})\backslash E} u^2(y, z, t) \\&\leq 9(\Phi(\frac{1}{i})+\sqrt a)+vol(B(x, \frac{1}{2}))^2\sqrt{a}.\end{aligned}\end{equation} This completes the proof of proposition \ref{thm0}.
\end{proof}
Let $\epsilon, \delta>0$ be small numbers to be determined later. For fixed $t$
and any point $y\in B(x, \frac{1}{2})$, define $K_y = \{z\in B(x, \frac{1}{2})|u(y, z)\leq \epsilon\}$ (we have simplified $u(x, y, t)$ as $u(x, y)$). Also define $K_\delta = \{y\in B(x, \frac{1}{2})|vol(K_y)\geq vol(B(x, \frac{1}{2}))-\delta\}$.  According to proposition \ref{thm0}, \begin{equation}\label{80}vol(K_\delta)\geq vol(B(x, \frac{1}{2}))-\Phi(\frac{1}{i}).\end{equation} Thus we can find an $\epsilon$-net $x_1,....., x_N$ where $N = N(\epsilon)$ such that $x_j\in K_\delta; B(x, \frac{1}{2})\subset \cup B(x_j, \epsilon)$.  Define $K = \cap_jK_{x_j}$. For fixed $\epsilon$, first let $\delta$ be sufficiently small, then let $i$ be sufficiently large. By (\ref{80}) and the volume comparison, we may assume $vol(K)$ is so close to $vol(B(x, \frac{1}{2}))$ that $K$ is $\epsilon$-dense in $B(x, \frac{1}{2})$. 
\begin{claim}\label{cl5}
For any $y, z\in K$, $u(y, z)\leq 4\epsilon$.
\end{claim}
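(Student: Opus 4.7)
The plan is to reduce the distortion bound $u(y,z) \leq 4\epsilon$ for arbitrary $y, z \in K$ to the two pieces of information we already have about $K$: the points of $K$ lie within $\epsilon$ of some point of the net $\{x_j\}$, and every $x_j$ satisfies $u(x_j, w) \leq \epsilon$ for every $w \in K_{x_j} \supseteq K$. The mechanism is a double triangle inequality, one for the original metric $d$ and one for the metric pulled back by $\sigma_{i,t}$.

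First I would fix $y, z \in K$ and, using that $\{x_1, \ldots, x_N\}$ is an $\epsilon$-net in $B(x, 1/2)$, pick an index $j$ with $d(y, x_j) \leq \epsilon$. Since $y \in K \subset K_{x_j}$, we have $u(x_j, y) \leq \epsilon$, and hence by the definition of $u$,
\[
d(\sigma_{i,t}(y), \sigma_{i,t}(x_j)) \leq d(y, x_j) + u(x_j, y) \leq 2\epsilon.
\]
This controls how much $\sigma_{i,t}$ can spread the nearby pair $(y, x_j)$, which is the only nontrivial ingredient in the argument.

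Next I would decompose
\[
u(y, z) \leq \bigl|d(y,z) - d(x_j, z)\bigr| + u(x_j, z) + \bigl|d(\sigma_{i,t}(x_j), \sigma_{i,t}(z)) - d(\sigma_{i,t}(y), \sigma_{i,t}(z))\bigr|.
\]
The first term is at most $d(y, x_j) \leq \epsilon$ by the triangle inequality for $d$. The second term is at most $\epsilon$ because $z \in K \subset K_{x_j}$. The third term is at most $d(\sigma_{i,t}(x_j), \sigma_{i,t}(y)) \leq 2\epsilon$ by the previous paragraph. Summing the three contributions yields $u(y,z) \leq 4\epsilon$, which is the claim.

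There is no real obstacle in this step: it is a purely formal triangle inequality argument given the construction of $K$. The substantive work already sits in Proposition \ref{thm0} and in the volume estimate (\ref{80}) guaranteeing that such a $K$ of nearly full measure (and hence $\epsilon$-dense) exists. The only point worth being careful about is that when estimating $d(\sigma_{i,t}(y), \sigma_{i,t}(x_j))$ we use the small-distortion property of $\sigma_{i,t}$ on a single pair, which is exactly what membership $y \in K_{x_j}$ provides.
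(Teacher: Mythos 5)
Your argument is correct and is essentially the paper's own proof: both pick a net point $x_j$ with $d(y,x_j)\leq\epsilon$, use $y,z\in K\subset K_{x_j}$ to get $u(x_j,y),u(x_j,z)\leq\epsilon$, and conclude by the triangle inequality (the paper bounds $|u(y,z)-u(x_j,z)|\leq 3\epsilon$ and adds $u(x_j,z)\leq\epsilon$, which is the same three-term estimate you wrote in a single decomposition). No gaps.
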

\begin{proof}
Since $x_1,....., x_N$ is an $\epsilon$-net, we can find $x_j$ with $d(x_j, y)\leq \epsilon$. According to the assumption, $K\subset K_{x_j}$. Thus $u(x_j, z)\leq \epsilon, u(x_j, y)\leq\epsilon$.
\begin{equation}\begin{aligned} |u(y, z)-u(x_j, z)|&\leq |d(y, z)-d(x_j, z)| + |d(\sigma_{i, t}(y), \sigma_{i, t}(z)-d(\sigma_{i, t}(x_j), \sigma_{i, t}(z))|\\&\leq d(y, x_j)+ d(\sigma_{i, t}(y), \sigma_{i, t}(x_j)) \\&\leq d(y, x_j) + u(y, x_j)+d(y, x_j)\leq 3\epsilon.\end{aligned}\end{equation} Thus $u(y, z)\leq u(x_j, z)+3\epsilon \leq 4\epsilon$.
\end{proof}

Claim \ref{5} says $\sigma_{i, t}$ is equicontinuous on $K$. By taking $\epsilon\to 0$, we find that $K$ is getting denser and denser in $B(x, \frac{1}{2})$. 
Thus we are able to take a convergent subsequence of $\sigma_{i, t}$ when $i\to\infty$. Note that the convergence is only in the measure sense. Let $\sigma_t$ be the limit of $\sigma_{i, t}$. According to the construction, $\sigma_t$ is a local isometry for small $t$.

\begin{claim}\label{cl6}
There exists a subsequence of $\sigma_{i, t}$ which converges uniformly in the measure sense for all $t$ satisfying $|t|\leq \frac{1}{10C_1(n)}$ ($C_1(n)$ appears in (\ref{2})). That is to say, given any $\epsilon>0, \eta>0$, there exists $i_0$ such that for any $i>i_0$, we can find $K_i\subset A_i$ so that for any $x, y\in K_i$, $u(x, y, t)\leq\epsilon$ and $vol(K_i)\geq vol(A_i)-\eta$ for $|t|\leq\frac{1}{10C_1(n)}$.\end{claim}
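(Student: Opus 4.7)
The plan is to upgrade the single-time estimate of Claim \ref{cl5} to a statement uniform in $t$ by combining it with equicontinuity of the family $\{\sigma_{i,t}\}$ in the time variable. The underlying observation is that since $|X_i|\leq C_1(n)$, the triangle inequality gives
\begin{equation*}
|d(\sigma_{i,t}(y),\sigma_{i,t}(z))-d(\sigma_{i,s}(y),\sigma_{i,s}(z))|\leq 2C_1(n)|t-s|,
\end{equation*}
so $t\mapsto u(y,z,t)$ is Lipschitz in $t$ with constant $2C_1(n)$, independently of $y$, $z$, and $i$. This reduces uniformity in $t$ to control at finitely many time values.

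To execute this, I would fix $\epsilon,\eta>0$, choose $\tau$ with $4C_1(n)\tau<\epsilon$, and take a $\tau$-net $t_1,\ldots,t_M$ of the interval $[-\frac{1}{10C_1(n)},\frac{1}{10C_1(n)}]$ with cardinality $M=M(\epsilon)$. Next I would cover the annulus $A_i$ by a fixed number $N$ of balls $B(x_k,\frac{1}{2})$ with $x_k\in B(q_i,7)\setminus B(q_i,4)$; volume comparison ensures that $N$ can be chosen independent of $i$. For each pair $(j,k)$, the argument behind Claim \ref{cl5} (applied with $\epsilon/2$ in place of $\epsilon$ and with the auxiliary $\delta$ chosen small enough) yields a subset of $B(x_k,\frac{1}{2})$ of volume at least $vol(B(x_k,\frac{1}{2}))-\Phi(1/i)$ on which pairwise distortion at time $t_j$ is at most $\epsilon/2$. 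Intersecting these subsets across all $(j,k)$, and keeping the result inside the original covering, produces $K_i\subset A_i$ with $vol(A_i\setminus K_i)<\eta$ once $i$ is sufficiently large.

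For arbitrary $t$ in the allowed interval, choose $t_j$ with $|t-t_j|\leq\tau$; combining the Lipschitz bound in time with the single-time control at $t_j$ gives
\begin{equation*}
u(y,z,t)\leq u(y,z,t_j)+2C_1(n)|t-t_j|\leq \epsilon/2 + 2C_1(n)\tau\leq\epsilon
\end{equation*}
for $y,z\in K_i$ lying in a common ball of the covering. The extension to pairs at arbitrary distance inside $A_i$ follows by chaining the estimate along a sequence of overlapping balls of radius $\frac{1}{2}$: since $A_i$ has bounded diameter the number of links is bounded by a constant depending only on $n$, and since the chain argument applies to both $\sigma_{i,t}$ and its inverse flow $\sigma_{i,-t}$ (which also has velocity bound $C_1(n)$), both the upper and lower distance bounds survive with only an adjustment of the final constant. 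A standard diagonal argument over sequences $\epsilon_k,\eta_k\to 0$ then extracts a subsequence along which convergence of $\sigma_{i,t}$ in measure holds uniformly in $t$.

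The main obstacle I anticipate is the ordering of quantifiers: since Claim \ref{cl5} is invoked $MN$ times with $M=M(\epsilon)$ and $N=N(n)$, one must fix $\epsilon$, $\eta$, and the time net before letting $i\to\infty$, so that each of the $MN$ contributions $\Phi(1/i)$ to the exceptional measure is negligible in their sum. This ordering is also the structural reason why the conclusion is phrased as the existence of a subsequence rather than as a uniform bound along the full sequence.
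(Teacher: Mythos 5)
Your proposal is correct and follows essentially the same route as the paper: discretize the time interval $[-\frac{1}{10C_1(n)},\frac{1}{10C_1(n)}]$ into a finite net, apply the single-time measure estimate (Proposition \ref{thm0}/Claim \ref{cl5}) at the net points, and use the velocity bound $|X_i|\leq C_1(n)$ plus the triangle inequality to control $u(y,z,t)$ for intermediate $t$. You merely make explicit the spatial covering of $A_i$ by finitely many balls, the chaining/inverse-flow step, and the quantifier ordering, all of which the paper's two-line proof leaves implicit.
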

\begin{proof}
Divide $[-\frac{1}{10C_1(n)}, \frac{1}{10C_1(n)}]$ into finitely many subintervals so that the length of each subinterval is small.  Then we can prove that the convergence for the endpoints of each subinterval. Note that the generating vector field $X_i$ has bounded length. Then the proof follows from the triangle inequality.
\end{proof}

\begin{cor}\label{cor2}
For any $t_1, t_2$ with $|t_1|, |t_2|\leq \frac{1}{10C_1(n)}$, $\sigma_{t_1}\circ\sigma_{t_2} = \sigma_{t_2}\circ\sigma_{t_1}$. \end{cor}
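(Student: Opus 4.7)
The heart of the argument is that the commutativity is \emph{tautological} at the smooth level. Each $\sigma_{i,t}$ is the flow of the single smooth vector field $X_i = J\nabla \rho_i$, so it is a local one-parameter group of diffeomorphisms. For any $|t_1|,|t_2|\le 1/(10C_1(n))$ the group law gives
$$\sigma_{i,t_1}\circ\sigma_{i,t_2} \;=\; \sigma_{i,t_1+t_2} \;=\; \sigma_{i,t_2}\circ\sigma_{i,t_1}$$
on any open subset of $N_i$ where all three flows are defined. Because $|X_i|\le C_1(n)$ on $A_i$ and $\sigma_{i,s}$ preserves the level sets of $\rho_i$, an orbit starting in a slightly shrunken annulus stays inside $A_i$ throughout, so this pointwise identity is valid on a large open subset of $A_i$.

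The remaining work is to promote this identity to the measure-limit map $\sigma_t$. By Claim \ref{cl6} I pick a diagonal subsequence along which $\sigma_{i,s}\to\sigma_s$ uniformly in measure for every $|s|\le 1/(10C_1(n))$ simultaneously. Fix an arbitrary test ball $B(x,\tfrac12)\subset B(q_i,7)\setminus B(q_i,4)$ and small parameters $\epsilon,\eta>0$. Claim \ref{cl6} produces $K\subset B(x,\tfrac12)$ with $vol(B(x,\tfrac12)\setminus K)<\eta$ on which every $\sigma_{i,s}$ with $|s|\le 1/(10C_1(n))$ is an $\epsilon$-almost isometry. Using the near measure preservation from Claim \ref{cl4}, the smaller set
$$K' \;=\; K\,\cap\,\sigma_{i,t_1}^{-1}(K)\,\cap\,\sigma_{i,t_2}^{-1}(K)\,\cap\,(\sigma_{i,t_1}\circ\sigma_{i,t_2})^{-1}(K)$$
still has measure $\ge vol(B(x,\tfrac12))-O(\eta)-\Phi(1/i)$. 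On $K'$, every intermediate image appearing in either composition lies in $K$, so the approximate-isometry property applies throughout and, by the argument already used in Claim \ref{cl5}, we may chain distances through a Gromov-Hausdorff approximation. Combined with the smooth-level identity $\sigma_{i,t_1}\circ\sigma_{i,t_2}=\sigma_{i,t_2}\circ\sigma_{i,t_1}$, passing $i\to\infty$ yields
$$d\bigl(\sigma_{t_1}\sigma_{t_2}(y),\,\sigma_{t_2}\sigma_{t_1}(y)\bigr) \;=\; 0$$
for almost every $y\in B(x,\tfrac12)$. Since both $\sigma_{t_1}\circ\sigma_{t_2}$ and $\sigma_{t_2}\circ\sigma_{t_1}$ are local isometries of $X_q$ agreeing on a dense subset of this ball, they coincide everywhere, and since $B(x,\tfrac12)$ is arbitrary the corollary follows.

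The principal technical obstacle is the bookkeeping in the middle step: measure-sense convergence $\sigma_{i,s}\to\sigma_s$ does not a priori respect compositions, because the intermediate point $\sigma_{i,t_2}(y)$ could drift into the bad set where the convergence of $\sigma_{i,t_1}$ is uncontrolled. The remedy is exactly the almost measure preservation of Claim \ref{cl4}, which ensures that preimages of good sets under $\sigma_{i,s}$ are again good. Once this is in place, everything reduces to the trivial flow identity, and no new estimates on $X_i$ or $\rho_i$ are required.
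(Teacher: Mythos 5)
Your proposal is correct and follows essentially the same route as the paper: the commutativity is exactly the one-parameter group law for the flow of the single field $X_i=J\nabla\rho_i$, and the passage to the limit map $\sigma_t$ is carried out via the uniform measure-sense convergence of Claim \ref{cl6} together with the almost measure preservation of Claim \ref{cl4}. The paper states this in two lines; your write-up simply supplies the bookkeeping (intersecting the good set with preimages under the relevant flows) that the paper leaves implicit.
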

\begin{proof}
As $\sigma_{i, t}$ is generated by $X_i$, $\sigma_{i, t_1}\circ\sigma_{i, t_2} = \sigma_{i, t_2}\circ\sigma_{i, t_1}$.
The proof follows from claim \ref{cl4} and claim \ref{cl6}.\end{proof}

Let $r(x)=d(q, x)$ on $X_q$. As $r^{-1}[5, 6]$ is compact in $B(q, 7)\backslash B(q, 4)$, we can cover it by finitely many small balls $B_j$ with radius $\frac{1}{2}$. Then on each $B_j$ we have isometry $\sigma^j_t$. By taking further subsequences, we may assume $\sigma^j_t$ coincides on the overlap. Then we glue $\sigma^j_t$ together. Say $\sigma_t$ is defined on $r^{-1}[5, 6]$. By (\ref{2}) and the fact that $\sigma_{i, t}$ preserves $\rho_i$, we obtain that $r^{-1}[5, 6]$ is invariant under $\sigma_t$. 
Next we extend the map $\sigma_t$ for all $t\in\mathbb{R}$. Namely, if $|t|>\frac{1}{10C_1(n)}$, we can write $t = \sum_j t_j$ where $|t_j|\leq \frac{1}{10C_1(n)}$ for each $j$.
Define $\sigma_t = \sigma_{t_1}\circ\cdot\cdot\cdot\circ\sigma_{t_j}$. By corollary \ref{cor2}, $\sigma_t$ is well defined. 
\begin{claim}\label{cl7}
In $r^{-1}[5, 6]$, $\sigma_t$ commutes with the homothety map $\lambda_s: (u, r)\to (u, e^sr)$. Here $u$ is in the cross section $\Sigma$, $s\in\mathbb{R}$. Moreover, $\sigma_t$ is an isometry on the cross section of the metric cone. 
\end{claim}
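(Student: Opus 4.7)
The plan is to realize the cone homothety $\lambda_s$ as the measure-theoretic limit of a smooth flow $\psi_{i,s}$ on $N_i$, then use the K\"ahler identity $\nabla J=0$ together with the Hessian estimate (\ref{1}) to show that $\psi_{i,s}$ and $\sigma_{i,t}$ approximately commute in an $L^2$-averaged sense, and finally pass to the limit as in Claims \ref{cl5}--\ref{cl6}. Concretely, let $\psi_{i,s}$ denote the (local) flow of $\nabla\rho_i$ on a slight enlargement of $A_i$, defined for $|s|$ small enough that orbits remain inside $A_i$. Because $L_{\nabla\rho_i}g_i=2\nabla^2\rho_i$, equation (\ref{1}) says $\psi_{i,s}$ is an approximate homothety with $\psi_{i,s}^*g_i\approx e^{2s}g_i$; combined with $\rho_i\approx r_i^2/2$, an argument parallel to Claims \ref{cl4}--\ref{cl6} shows that, up to a subsequence, $\psi_{i,s}$ converges in the measure-theoretic sense to $\lambda_s$ on $r^{-1}[5,6]$.

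The heart of the proof is the Lie bracket computation: since $\nabla J=0$,
\begin{equation*}
[\nabla\rho_i,\,J\nabla\rho_i]=J\nabla_{\nabla\rho_i}\nabla\rho_i-\nabla_{J\nabla\rho_i}\nabla\rho_i,
\end{equation*}
and when $\nabla^2\rho_i=g_i$ one has $\nabla_Y\nabla\rho_i=Y$ for all $Y$, so the bracket vanishes; in our setting (\ref{1}) yields the $L^1$ bound $\int_{A_i}|[\nabla\rho_i,J\nabla\rho_i]|\le\Phi(1/i)$. To transfer this infinitesimal commutation to the flow level I would mimic the proof of Proposition \ref{thm0}: define
\begin{equation*}
H(x,t,s)=\int_{B(x,1/2)}\bigl|d(\sigma_{i,t}\psi_{i,s}(y),\,\psi_{i,s}\sigma_{i,t}(y))\bigr|^2\,dvol(y),
\end{equation*}
differentiate in $s$ via the first variation of arc length (Claim \ref{cl2}), control the volume distortion by Claim \ref{cl4}, and observe that the resulting integrand is, up to lower-order terms, $|[\nabla\rho_i,J\nabla\rho_i]|$ integrated along the short geodesic joining the two endpoints. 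The segment inequality (Proposition 1) combined with the $L^1$ bound on the bracket then gives $|\partial_s H|\le\Phi(1/i)$, and since $H(x,t,0)=0$ we conclude $H\le\Phi(1/i)$ for $|s|$ small. Together with the measure-theoretic convergence of $\sigma_{i,t}$ and $\psi_{i,s}$, this gives $\sigma_t\circ\lambda_s=\lambda_s\circ\sigma_t$ on $r^{-1}[5,6]$ for small $s,t$; the extension to all $s,t$ follows from the semigroup property as in Corollary \ref{cor2}.

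For the second conclusion, $\langle X_i,\nabla\rho_i\rangle=\langle J\nabla\rho_i,\nabla\rho_i\rangle=0$ implies $\sigma_{i,t}$ preserves the level sets of $\rho_i$, hence $\sigma_t$ preserves $r$. Combined with $\sigma_t\circ\lambda_s=\lambda_s\circ\sigma_t$ and the cone decomposition $g_{X_q}=dr^2+r^2 g_\Sigma$, this forces $\sigma_t$ to act on every $r$-level set by the same map, which must be an isometry of $(\Sigma,g_\Sigma)$ since $\sigma_t$ is locally isometric on $X_q$. The main obstacle I foresee is the bookkeeping in the commutator estimate: the iterated compositions $\sigma_{i,t}\psi_{i,s}$ and $\psi_{i,s}\sigma_{i,t}$ land in slightly different subregions of $A_i$, so one must enlarge the reference ball, track volume distortion through Claim \ref{cl4}, and very likely partition $[0,s]$ into many short subintervals in the spirit of Claim \ref{cl6} to close the estimate cleanly.
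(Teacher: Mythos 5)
Your overall strategy is genuinely different from the paper's, but its central step has a real gap: the transfer from the $L^1$-small bracket $[\nabla\rho_i, J\nabla\rho_i]$ to approximate commutation of the two flows does not close as written. When you differentiate $H(x,t,s)$ in $s$ by first variation, the velocity of the endpoint $\sigma_{i,t}\psi_{i,s}(y)$ is $(d\sigma_{i,t})\bigl(\nabla\rho_i(\psi_{i,s}(y))\bigr)$, not $\nabla\rho_i$ evaluated at that endpoint; the discrepancy is an integral along the orbit of $(d\sigma_{i,\tau})$ applied to the bracket, and this carries the differential of the flow, which is not pointwise controlled --- (\ref{1}) gives only $L^2$ bounds on $\nabla^2\rho_i$ and hence on $\nabla X_i$, so $|d\sigma_{i,\tau}|$ may be large on an uncontrolled set. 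The Cheeger--Colding scheme you are imitating (Claims \ref{cl2}--\ref{cl4} and the segment inequality, Proposition 1) works precisely because it only integrates the scalar $\langle\nabla_e X_i, e\rangle$ along geodesics between two points moved by the \emph{same} flow, never the differential of a flow; so the claim that the integrand is ``up to lower-order terms'' the bracket along the connecting geodesic is not justified, and $|\partial_s H|\le\Phi(1/i)$ does not follow from the segment inequality plus the $L^1$ bracket bound. Making your route rigorous would require function-level estimates of the kind the paper proves much later (Lemma \ref{lm-20}, Claim \ref{cl-19}, Proposition \ref{prop-10}, Lemma \ref{lm-100}), which also cover a second point you leave implicit: identifying the limit of $\psi_{i,s}$ as exactly $\lambda_s$ needs both $\rho_i\circ\psi_{i,s}\approx e^{2s}\rho_i$ and the displacement bound $d(\psi_{i,s}(y),y)\lesssim|e^s-1|r_i(y)$; an ``argument parallel to Claims \ref{cl4}--\ref{cl6}'' only gives convergence to some limit map, not to the radial dilation.

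By contrast, the paper's proof of this claim works entirely downstairs and needs none of this machinery: since $\sigma_{i,t}$ preserves $\rho_i$, the limit $\sigma_t$ preserves $r$, so one may write $\sigma_t(u,r)=(u_{t,r},r)$; applying the local isometry $\sigma_t$ to the radial pair $(u,r)$, $(u,e^sr)$ gives $d\bigl((u_{t,r},r),(u_{t,e^sr},e^sr)\bigr)=|e^s-1|r$, and the equality case of the warped-product distance formula forces $u_{t,r}=u_{t,e^sr}$, which is the commutation. Also, your final step ``locally isometric on each level set, hence an isometry of $(\Sigma,g_\Sigma)$'' is too quick: a local isometry need not be a global one; the paper closes this by noting that $\sigma_t$ is distance nonincreasing on $\Sigma$ and has the inverse $\sigma_{-t}$ with the same property, which together force it to be an isometry. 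If you wish to keep an upstairs argument, replace the bracket/first-variation step by measure-theoretic estimates on how the flows move $\rho_i$ and approximate coordinates, in the spirit of Lemma \ref{lm-100}; otherwise the rigidity argument in the cone is both shorter and complete.
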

\begin{proof}
Obviously $\sigma_t$ is an local isometry in $r^{-1}[5, 6]$. By the relation between the distance function on $\Sigma$ and $X_q$, we obtain that $\sigma_t$ is a local isometry on the cross section $\Sigma$ for fixed $r$.  A simple argument implies that the distance function on $\Sigma$ is nonincreasing under the map $\sigma_t$. 
As $\sigma_t$ is a homeomorphism on $\Sigma$ (it has an inverse $\sigma_{-t}$), $\sigma_t$ must be an isometry on $\Sigma$ for each $r$.
Next we prove that $\sigma_t$ commutes with the homothety map. It suffices to prove this for small $t$ and $s$. For $u\in\Sigma$, let $\sigma_t(u, r) = (u_{t, r}, r)$. We only need to prove that $u_{t, r}$ is independent of $r$.
 As $t$ and $s$ are small, $d((u, r), (u, e^sr))$ is small. Hence, by the local isometry of $\sigma_t$, $|e^s-1|r=d((u, r), (u, e^sr)) = d((u_{t, r}, r), (u_{t, e^sr}, e^sr))\geq |e^s-1|r$.
By the distance formula for the warped product metric, $u_{t, r} = u_{t, e^{s}r}$. This proves the commutativity.

\end{proof}

By claim \ref{cl7} and the distance formula for warped product metric, we easily extend $\sigma_t$ as an isometry on $X_q$ which commutes with the homothety map.

\medskip

Next we prove that $\sigma_t$ is locally free on $X_q\backslash q$. As $\sigma_t$ is isomorphic to $\mathbb{R}$, it suffices to prove that $\sigma_t$ has no common fixed point except the vertex $q$. We argue by contradiction. Without loss of generality, assume $\sigma_t(x) = x$ for all $t\in\mathbb{R}$, where $5< r(x) <6$. Consider a tangent cone at $x$, say $\mathbb{R}^k\times W$. Here $W$ is a metric cone without Euclidean factor. Let us assume the vetex of $\mathbb{R}^k\times W$ is $(0^k, w^*)$. As $X_q$ is a metric cone, $k\geq 1$. In fact, by theorem $9.1$ in \cite{[CCT]}, $k$ must be even.

 Let $\epsilon>0, \delta>0$ be very small fixed constants and $N$ be a very large constant so that $N\epsilon$ is still very small. Say take $N = \frac{1}{\sqrt{\epsilon}}$.  We may assume $$d_{GH}(B(x, N\epsilon), B_{\mathbb{R}^{k}\times W}((0^k, w^*), N\epsilon))\leq \frac{\delta\epsilon}{20}.$$ Recall $(N_i, q_i)\to (X_q, q)$ which is considered in the beginning of this section. Take $x_i\in N_i$ with $x_i\to x$. Then, for all large $i$,  $$d_{GH}(B(x_i, N\epsilon), B_{\mathbb{R}^{k}\times W}((0^k, w^*), N\epsilon))\leq \frac{\delta\epsilon}{10}.$$ Note that $5 \leq r_i(x_i):=d(x_i, q_i)\leq 6$.

 Let $F_i$ be the scale map $B(x_i, N\epsilon, g_i)\to B(y_i, N, \frac{1}{\epsilon^2}g_i)$.  For notational convenience, we simplify $B(y_i, N, \frac{1}{\epsilon^2}g_i)$ as $B(y_i, N)$. Then 
 \begin{equation}\label{14}d_{GH}(B(y_i, N), B_{\mathbb{R}^{k}\times W}((0^k, w^*), N))\leq \delta.\end{equation} Recall $X_i=J\nabla\rho_i$. Define $$Y_i = \frac{\epsilon}{r_i(x_i)}(F_i)_*X_i$$ in $B(y_i, 10)$. By (\ref{2}), \begin{equation}\label{eq7}|Y_i|\leq C_1(n).\end{equation} Note $\epsilon$ is independent of $i$.
Then by (\ref{1}) and the volume comparison,  
\begin{equation}\label{eq101}\dashint_{B(x_i, 10\epsilon)}|\nabla\rho_i-\nabla \frac{1}{2}r_i^2|^2 + |\nabla^2\rho_i-g_i|^2<\Phi(\frac{1}{i}|\epsilon).\end{equation} Thus $$\dashint_{B(x_i, 10\epsilon)}||X_i|-r_i(x_i)|\leq 20\epsilon, \dashint_{B(x_i, 10\epsilon)}|\nabla X_i|^2\leq 4n.$$ Therefore
\begin{equation}\label{9}\dashint_{B(y_i, 10)}||Y_i|-1|\leq 20\epsilon, \dashint_{B(y_i, 10)}|\nabla Y_i|^2\leq 4n\epsilon^2.\end{equation}

We need two lemmas in \cite{[CCT]}:
\begin{lemma}\label{lm1}[Lemma 9.5]
Let $V$ be an open subset of a complete manifold $M$. Let $\lambda>0$ be the smallest nonzero eigenvalue of Laplacian on $V$ with Dirichlet boundary conditions. Let $h: V\to \mathbb{R}$ be Lipschitz and let $v$ be a vector field on $V$ such that
\begin{equation}\label{10}
\sup\limits_{V} |h|\leq c; \dashint_V|\nabla h-v|^2\leq\tilde\delta^2; \dashint_V|div v|\leq\tilde\delta.
\end{equation}
Then if $b$ denotes the harmonic function such that $b|_{\partial V} = h|_{\partial V}$,
\begin{equation}\label{11}
\dashint_V|\nabla h-\nabla b|^2\leq (4c+\tilde\delta)\tilde\delta; \dashint_V(h-b)^2\leq (4c+\tilde\delta)\tilde\delta \lambda^{-1}.
\end{equation}
\end{lemma}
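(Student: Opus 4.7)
The plan is to combine a standard elliptic energy identity with the Poincar\'e inequality coming from the first Dirichlet eigenvalue. First, since $b$ is harmonic on $V$ with $b|_{\partial V}=h|_{\partial V}$, the maximum principle gives $\sup_V |b|\leq c$, so $|h-b|\leq 2c$ pointwise on $V$. Moreover $h-b$ vanishes on $\partial V$, making it a legitimate test function. The key identity is
$$\int_V |\nabla(h-b)|^2 \;=\; \int_V \nabla h\cdot \nabla(h-b) - \int_V \nabla b\cdot\nabla(h-b) \;=\; \int_V \nabla h\cdot\nabla(h-b),$$
where the last equality uses $\Delta b=0$ together with the Dirichlet boundary condition.

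Next, I would split $\nabla h=(\nabla h-v)+v$ and integrate the $v$-piece by parts (again justified by $(h-b)|_{\partial V}=0$) to obtain
$$\int_V |\nabla(h-b)|^2 \;=\; \int_V (\nabla h-v)\cdot \nabla(h-b) \;-\; \int_V (h-b)\,\mathrm{div}\,v.$$
Cauchy--Schwarz and the hypothesis $\dashint_V|\nabla h-v|^2\leq\tilde\delta^2$ control the first term by $\tilde\delta\,|V|^{1/2}\,\|\nabla(h-b)\|_{L^2}$, while the bound $|h-b|\leq 2c$ and $\dashint_V|\mathrm{div}\,v|\leq\tilde\delta$ bound the second by $2c\tilde\delta\,|V|$. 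Dividing through by $|V|$ and writing $A:=\dashint_V|\nabla(h-b)|^2$, we get the scalar inequality $A\leq\tilde\delta\sqrt{A}+2c\tilde\delta$. The weighted AM--GM bound $\tilde\delta\sqrt{A}\leq\tfrac12\tilde\delta^2+\tfrac12 A$ then yields $A\leq\tilde\delta^2+4c\tilde\delta=(4c+\tilde\delta)\tilde\delta$, which is the first conclusion.

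For the $L^2$ bound on $h-b$, note that $h-b\in W^{1,2}_0(V)$, so the variational characterization of the first Dirichlet eigenvalue $\lambda$ gives
$$\int_V (h-b)^2 \;\leq\; \lambda^{-1}\int_V |\nabla(h-b)|^2.$$
Combining this with the gradient estimate proved above immediately yields $\dashint_V(h-b)^2\leq(4c+\tilde\delta)\tilde\delta\,\lambda^{-1}$, which is the second assertion.

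The argument is essentially routine once the correct integration by parts is set up; the only real bookkeeping issue is keeping the averaged integrals $\dashint$ straight against their unnormalized counterparts $\int$, and checking that the integration by parts is legitimate for a merely Lipschitz $h$. The latter follows because Lipschitz functions lie in $W^{1,2}(V)$ and $(h-b)\in W^{1,2}_0(V)$, so standard density/approximation of both sides makes every identity rigorous.
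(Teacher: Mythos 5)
Your argument is correct: the energy identity $\int_V|\nabla(h-b)|^2=\int_V\langle\nabla h-v,\nabla(h-b)\rangle-\int_V(h-b)\,\mathrm{div}\,v$, the maximum-principle bound $|h-b|\leq 2c$, Cauchy--Schwarz plus AM--GM giving $A\leq\tilde\delta^2+4c\tilde\delta=(4c+\tilde\delta)\tilde\delta$, and the variational characterization of $\lambda$ for the $L^2$ bound reproduce exactly the stated constants. The paper itself quotes this as Lemma 9.5 of \cite{[CCT]} without proof, and your argument is essentially the standard proof given there, so there is nothing to add beyond the routine justification of the integration by parts (which you address) and the implicit assumption that $V$ is bounded enough for the maximum principle and the Dirichlet problem, as in the paper's application to balls.
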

\begin{lemma}\label{lm2}[Lemma 9.14]
Let $M^n$ be a Riemannian manifold with $Ric_{M^n}\geq -(n-1)\tilde\epsilon^2$, $m\in M$. Let $v$ be a vector field on $M$ so that
\begin{equation}\label{12}
\sup\limits_{B(m, 1)}|v|\leq c; \dashint_{B(m, 1)}|\nabla v|^2\leq\tilde\delta^2; (1-\tilde\delta)\frac{vol(B(m, 1))}{vol(B(\underline{m}, 1))}\leq \frac{vol(B(m, 2))}{vol(B(\underline{m}, 2))}.\end{equation}
Here $\underline{m}$ is a point in the complete simply connected space form $\underline{M}^n$ with constant curvature $-\tilde\epsilon^2$. Then there exists $h: B(m, 1)\to\mathbb{R}$ so that 
\begin{equation}\label{13}
\sup\limits_{B(m, 1)}|h|\leq c(n, c); \dashint_{B(m, 1)}|\nabla h-v|^2\leq\Phi(\tilde\epsilon, \tilde\delta|n, c).
\end{equation}
\end{lemma}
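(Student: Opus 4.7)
The plan is to exploit the almost volume cone condition in (\ref{12}) together with the Ricci lower bound to place ourselves in an almost Euclidean regime on $B(m,1)$, and then extract $h$ by projecting $v$ onto a Cheeger-Colding approximate splitting. First I would invoke the Cheeger-Colding almost-splitting machinery: the almost volume cone condition combined with $Ric\geq -(n-1)\tilde\epsilon^2$ produces harmonic functions $b_1,\dots,b_n$ on $B(m,1)$ which approximate Euclidean coordinate functions on the comparison model, in the sense that
\begin{equation}
\dashint_{B(m,1)}\Bigl(|\nabla^2 b_i|^2 + \sum_{i,j}|\langle\nabla b_i,\nabla b_j\rangle-\delta_{ij}|^2\Bigr)\leq\Phi(\tilde\epsilon,\tilde\delta\,|\,n),
\end{equation}
together with uniform bounds $\sup|b_i| + \sup|\nabla b_i|\leq C(n)$ on $B(m,1)$.

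Given the $b_i$, the construction of $h$ is essentially an $L^2$ projection onto the span of the approximate splitting. Set $a_i=\langle v,\nabla b_i\rangle$, so $|a_i|\leq c\,C(n)$. Differentiating gives $\nabla a_i = (\nabla v)(\nabla b_i) + v\cdot \nabla^2 b_i$, and Cauchy-Schwarz together with the hypothesis $\dashint|\nabla v|^2\leq \tilde\delta^2$ and the Hessian smallness for the $b_i$ yields
\begin{equation}
\dashint_{B(m,1)}|\nabla a_i|^2 \leq \Phi(\tilde\epsilon,\tilde\delta\,|\,n,c).
\end{equation}
The Neumann Poincar\'e inequality on $B(m,1)$, which holds with a uniform constant in the Ricci-bounded setting, then forces $\dashint|a_i-\overline{a_i}|^2\leq\Phi(\tilde\epsilon,\tilde\delta\,|\,n,c)$, where $\overline{a_i}$ is the mean of $a_i$ on $B(m,1)$.

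I would then set $h=\sum_i \overline{a_i}\,b_i$. The sup bound $\sup|h|\leq c(n,c)$ follows immediately from the bounds on $\overline{a_i}$ and on $|b_i|$. For the $L^2$ approximation, decompose
\begin{equation}
\nabla h - v = \sum_i(\overline{a_i}-a_i)\nabla b_i \;-\; \Bigl(v-\sum_i a_i\nabla b_i\Bigr).
\end{equation}
The first sum is $L^2$-small by the Poincar\'e control on $a_i-\overline{a_i}$ combined with $|\nabla b_i|\leq C(n)$; the second term measures the failure of $\{\nabla b_i\}$ to be pointwise orthonormal and is $L^2$-small by the approximate-orthonormality estimate absorbed into the first displayed inequality. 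Adding the two contributions gives the required $\dashint|\nabla h-v|^2\leq \Phi(\tilde\epsilon,\tilde\delta\,|\,n,c)$.

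The main obstacle is producing the approximate splitting functions $b_i$ with all the estimates above from the bare almost-volume-cone hypothesis. This is the heart of the matter, and precisely where the Ricci lower bound and the volume comparison in (\ref{12}) enter, via the Cheeger-Colding almost rigidity theorems for harmonic approximations to distance functions on almost-cone balls. Once the $b_i$ are in hand, the rest of the proof is linear algebra, Cauchy-Schwarz, and the Poincar\'e inequality.
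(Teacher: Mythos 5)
The gap is at your very first step, and it is fatal to the whole scheme. The third condition in (\ref{12}) is \emph{not} an almost-maximal-volume hypothesis: Bishop--Gromov already gives $\frac{vol(B(m,2))}{vol(B(\underline m,2))}\leq\frac{vol(B(m,1))}{vol(B(\underline m,1))}$, so the assumption only says that the volume ratio is almost \emph{constant} between scales $1$ and $2$ (an almost volume cone condition); it says nothing about that ratio being close to $1$. By Cheeger--Colding this makes $B(m,1)$ close to a ball centered at the apex of some metric cone $C(Z)$ with an \emph{arbitrary} cross-section, not close to a Euclidean ball, so there is no reason for harmonic functions $b_1,\dots,b_n$ with $L^2$-small Hessians and almost-orthonormal gradients to exist: your first displayed estimate is essentially equivalent to $B(m,1)$ being Gromov--Hausdorff close to the unit ball of $\mathbb{R}^n$, i.e.\ to $vol(B(m,1))\geq(1-\Phi)vol(B(\underline m,1))$, which is not assumed. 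A (smoothed) cone over a short circle, times $\mathbb{R}^{n-2}$, satisfies the volume condition in (\ref{12}) with $\tilde\epsilon=0$ and $\tilde\delta$ essentially $0$, yet admits no such almost-splitting. Since everything downstream (the projection $h=\sum_i\overline{a_i}b_i$, the decomposition of $\nabla h-v$, the smallness of $v-\sum_i a_i\nabla b_i$) needs $\{\nabla b_i\}$ to be an almost-orthonormal frame spanning the tangent space, the argument collapses with that step. Worse, under your stronger reading the lemma would be useless exactly where this paper applies it: there the rescaled ball is only close to $B_{\mathbb{R}^{k}\times W}((0^k,w^*),N)$ with $W$ a cone that in general has no Euclidean factor (see (\ref{14})), since the point in question may be singular.

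Note also that the paper does not prove this statement at all; it is quoted verbatim as Lemma 9.14 of \cite{[CCT]}. The proof there works directly with the vector field $v$: the smallness of $\dashint_{B(m,1)}|\nabla v|^2$ says $v$ is almost parallel, and the primitive $h$ is produced without any splitting functions, essentially by integrating the $1$-form dual to $v$ along geodesics, with the errors controlled by the segment inequality and the almost-volume-cone information---which is precisely the role of the third condition in (\ref{12}). Your linear-algebra-plus-Poincar\'e reduction would be a correct and clean argument if the hypothesis were an almost-Euclidean volume bound, but as stated the construction of $h$ must proceed in a ball that is only almost conical, and that is the part your proposal does not address.
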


We apply lemma \ref{lm2} to $B(y_i, 10)$ and the vector field $Y_i$. That is, take $m = y_i$ and $v = Y_i$. Note the radii are different, but this does not affect the proof of the lemma.
(\ref{14}) and the volume convergence theorem of Colding \cite{[C]} imply the last condition in (\ref{12}). (\ref{eq7}) and (\ref{9}) imply the first two conditions of (\ref{12}).
Then we apply lemma \ref{lm1}: take $V = B(y_i, 10), v = Y_i$ while $h$ is given by lemma \ref{lm2}. Note that in this case, $\lambda$ has a positive lower bound which depends only on $n$.

(\ref{13}) implies that $|h|$ is bounded by $c(n)$. By the construction of $b$ in lemma \ref{lm1}, $|b|$ is also bounded by $c(n)$. Then Cheng-Yau gradient estimate \cite{[CY]} implies 
\begin{equation}\label{16}|\nabla b|\leq c_1(n)\end{equation} in $B(y_i, 5)$. (\ref{10}) and (\ref{11}) imply 
\begin{equation}\label{17}
\dashint_{B(y_i, 10)}|Y_i-\nabla b|^2\leq\Phi(\epsilon, \delta, \frac{1}{i}).
\end{equation}
Note that the right hand side is not going to $0$ as $i\to\infty$, since $\epsilon, \delta$ are fixed.
Then (\ref{17}) and (\ref{9}) imply 
\begin{equation}\label{18}
\dashint_{B(y_i, 10)}|\langle\nabla b, Y_i\rangle-1|\leq\Phi(\epsilon, \delta, \frac{1}{i}).
\end{equation}
Let $\overline\sigma_{i, t}$ be the flow generated by $Y_i$ in $B(y_i, 5)$ for $|t|\leq \frac{1}{10C_1(n)}$. Let $1>\tau>0$ be a small number, to be determined later. Define $$U_i = \{z\in B(y_i, \tau)|\overline\sigma_{i, \frac{1}{10C_1(n)}}(z)\in B(y_i, 2\tau)\}.$$Recall $y_i=F_i(x_i)$ and $x_i\to x$. According to claim \ref{cl6} and the assumption that $\sigma_t(x) = x$, 
 \begin{equation}\label{8}
 vol(U_i)\geq vol(B(y_i, \tau))-\Phi(\frac{1}{i}|\tau).
 \end{equation}

Define $$h(i, t) = \int_{z\in B(y_i, \tau)}|b(z) +t-b(\overline\sigma_{i, t}(z))|^2d\overline\sigma_{i, t}^*vol(z),$$ for $|t|\leq \frac{1}{10C_1(n)}$.
One the one hand, \begin{equation}\label{15}\begin{aligned}
|\frac{dh}{dt}|&\leq C\int_{z\in B(y_i, \tau)}|\langle\nabla b, Y_i\rangle_{\overline\sigma_{i, t}(z)}-1|d\overline\sigma_{i, t}^*vol(z)+C\int_{z\in B(y_i, \tau)}|\frac{d\overline\sigma_{i, t}^*vol(z)}{dt}|\\&\leq \Phi(\epsilon, \frac{1}{i}, \delta).\end{aligned}
\end{equation}
 In (\ref{15}), we have used (\ref{18}) and similar arguments as in claim \ref{cl4}.
Note $h(i, 0) = 0$. This proves 
\begin{equation}\label{19}|h(i, \frac{1}{10C_1(n)})|<\Phi(\epsilon, \frac{1}{i}, \delta).\end{equation}

On the other hand, if $\tau\leq\frac{1}{400c_1(n)C_1(n)}$, (\ref{16}) implies that $|b(z_1)-b(z_2)|\leq \frac{1}{100C_1(n)}$ for $z_1, z_2\in B(y_i, 2\tau)$.
Therefore
\begin{equation}\label{20}
\begin{aligned}
h(i, \frac{1}{10C_1(n)})&\geq \int_{U_i}|b(z) +\frac{1}{10C_1(n)}-b(\overline\sigma_{i, \frac{1}{10C_1(n)}}(z))|^2d\overline\sigma_{i, \frac{1}{10C_1(n)}}^*vol(z)\\&\geq
\frac{1}{20C_1(n)}\int_{U_i}d\overline\sigma_{i, \frac{1}{10C_1(n)}}^*vol(z)\\&\geq \frac{1}{20C_1(n)}(vol(B(y_i, \tau))-\Phi(\frac{1}{i}, \epsilon|\tau)).\end{aligned}
\end{equation}
Here, we have used (\ref{8}) and similar arguments as in claim \ref{cl4}. Take $$\tau = \min(1, \frac{1}{400c_1(n)C_1(n)}).$$ If $\epsilon, \delta$ are sufficiently small, (\ref{20}) contradicts (\ref{19}).

The proof of theorem \ref{thm1} is complete.

\end{proof}

\section{\bf{Applications to complete K\"ahler manifolds with nonnegative bisectional curvature}}

In this section we study the limit of analytic functions on Gromov-Hausdorff limit of K\"ahler manifolds with nonnegative bisectional curvature.
This topic has recently been studied in papers \cite{[L1]}-\cite{[L5]}. First recall some definitions and results in \cite{[L1]}.

On a K\"ahler manifold $M^n$, a holomorphic function $f\in\mathcal{O}_d(M)$ if $|f(x)|\leq c(r(x)+1)^d$. Here $r(x)$ is the distance to a fixed point, $c$ is independent of $r$.
\begin{definition}
Let $M^n$ be a complete K\"ahler manifold and $f\in \mathcal{O}(M)$. The order at infinity is defined by $\overline{\lim\limits_{r\to\infty}}\frac{\log M_f(r)}{\log r}$, where $r$ is the distance to a fixed point $p$ on $M$, $M_f(r)$ is the maximal modulus of $f$ on $B(p, r)$.
\end{definition}
\begin{remark}
Note that Colding and Minicozzi made a similar definition for harmonic functions over a metric cone (Definition $1.32$, \cite{[CM]}).
\end{remark}
\begin{prop}[Corollary $3$, \cite{[L1]}]
Let $M^n$ be a complete K\"ahler manifold with nonnegative holomorphic sectional curvature. If $f\in\mathcal{O}_{d+\epsilon}(M)$ for any $\epsilon>0$. Then $f\in\mathcal{O}_d(M)$.
\end{prop}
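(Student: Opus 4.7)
The plan is to derive this corollary from the three circles theorem for holomorphic functions on K\"ahler manifolds with nonnegative holomorphic sectional curvature, which is the main content of [L1]. That theorem asserts that for any $f \in \mathcal{O}(M)$ and any fixed base point $p$, the function $\phi(s) := \log M_f(e^s)$ is convex in $s$, where $M_f(r) = \sup_{B(p,r)} |f|$. Granting this, the proposition reduces to a short one-variable argument.

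Convexity of $\phi$ implies that its right derivative $\phi'$ exists everywhere and is monotone nondecreasing, so the limit $L := \lim_{s \to \infty} \phi'(s) \in (-\infty, +\infty]$ exists. A standard computation, comparing $\phi(s) - \phi(s_0)$ against $\int_{s_0}^s \phi'(t)\, dt$ for large $s_0$, then yields
\[
\lim_{s \to \infty} \frac{\phi(s)}{s} \;=\; L,
\]
so the order of $f$ at infinity (as defined right before the proposition) equals $L$. The hypothesis $f \in \mathcal{O}_{d+\epsilon}(M)$ for every $\epsilon > 0$ gives $\phi(s) \leq (d+\epsilon)s + C_\epsilon$, hence $L \leq d + \epsilon$ for every $\epsilon > 0$, so $L \leq d$.

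Since $\phi'$ is nondecreasing with $\lim_{s \to \infty}\phi'(s) \leq d$, we must have $\phi'(s) \leq d$ for every $s$. Integrating from a fixed $s_0$ yields $\phi(s) \leq \phi(s_0) + d(s - s_0)$, i.e. $M_f(r) \leq C r^d$ for all large $r$; combined with local boundedness of $f$, this is exactly $f \in \mathcal{O}_d(M)$. The real content is of course the three circles theorem itself, where all the difficulty lies: its proof in [L1] requires a maximum principle argument applied to $\log|f| - d\log r$ on geodesic annuli, together with comparison estimates exploiting the plurisubharmonicity of (a suitably mollified) $\log r$ coming from the nonnegative holomorphic sectional curvature hypothesis. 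Once that theorem is available, the deduction above is essentially formal.
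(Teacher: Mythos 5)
Your derivation is correct and matches the intended one: the paper itself does not reprove this statement but cites Corollary 3 of \cite{[L1]}, where it follows from the three circle theorem (quoted in this paper as Theorem 2 of \cite{[L1]}) by exactly the convexity argument you give, namely that $\phi'(s)\leq d+\epsilon$ for every $\epsilon>0$ forces $\phi'\leq d$ and hence $M_f(r)\leq Cr^d$ for large $r$. Your one-variable deduction is complete and sound, so there is nothing to add beyond noting that the real content lies in the three circle theorem of \cite{[L1]}, as you acknowledge.
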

Let $(M, g)$ be a complete K\"ahler manifold with nonnegative bisectional curvature.  Assume $f$ is a nonconstant holomorphic function of polynomial growth on $M$. Let $d<\infty$ is the order of $f$ at infinity (note $d$ might not be an integer). According to the proposition above, $f\in\mathcal{O}_d(M)$.

 We further assume $M^n$ is of maximal volume growth. Note this holds automatically when the universal cover does not split as a product. See theorem $2$ in \cite{[L2]}. Fix a point $p\in M$. Consider a tangent cone at infinity $(M_\infty, g_\infty, p_\infty)$ which is the Gromov-Hausdorff limit of $(M_i, g_i, p_i) = (M, r_i^{-2}g,  p)(r_i\to\infty)$. 
Define a rescaled function 
\begin{equation}\label{21}
f_i(x) = \frac{f(x)}{M_f(r_i)}.
\end{equation} Then $\sup |f_i(x)| = 1$ for $x\in B(p, r_i)$.  
\begin{prop}[Theorem $2$, \cite{[L1]}]
Let $M$ be a complete K\"ahler manifold with nonnegative holomorphic sectional curvature. Then $\log M_f(r)$ is convex in terms of $\log r$. Therefore, $\frac{M_f(kr)}{M_f(r)}$ is monotonic increasing for $k>1$. If $f\in \mathcal{O}_d(M)$,  then $\frac{M_f(r)}{r^d}$ is nonincreasing.
\end{prop}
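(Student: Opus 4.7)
The plan is to prove the convexity of $\log M_f(r)$ in $\log r$ via a Hadamard three-circle argument adapted to the Kähler setting; the two monotonicity assertions are then routine corollaries. The starting observation is that for any holomorphic $f$, the function $u=\log|f|^2$ is plurisubharmonic on $M$ by the Poincaré--Lelong formula $\sqrt{-1}\,\partial\bar\partial\log|f|^2 = 2\pi[Z_f]\geq 0$; in particular $u$ is subharmonic in the distributional sense with respect to the Laplacian of $g$.

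To obtain convexity, fix $0<a<R$ and set
$$\alpha=\frac{\log M_f(R)-\log M_f(a)}{\log R-\log a},$$
and aim for the barrier estimate
$$\log|f(x)|^2 \leq 2\alpha\log r(x) + 2\log M_f(a) - 2\alpha\log a\qquad\text{for all }x\in B(p,R)\setminus\overline{B(p,a)}.$$
Evaluating this at any $x$ with $r(x)=t\in(a,R)$ and optimizing gives $\log M_f(t)\leq \alpha\log t + \log M_f(a) - \alpha\log a$, which is precisely the three-circle inequality and hence convexity of $\log M_f(\cdot)$ in $\log r$.

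The core of the proof is the barrier estimate. In the flat model $\mathbb{C}^n$, this is the classical Hadamard argument done by restricting to complex lines $\zeta\mapsto\zeta v$ through the origin, along which $\log|f|$ is subharmonic, $\log|\zeta|$ is harmonic on $\mathbb{C}^*$, and the one-variable maximum principle on the annulus $\{a\leq|\zeta|\leq R\}\subset\mathbb{C}$ closes the argument. On the Kähler manifold $M$, I would reproduce this by producing, for each target point $x$ in the geodesic annulus, a holomorphic disk $\phi:D\to M$ with $\phi(0)=p$ whose image passes through (or sufficiently close to) $x$, together with the metric comparison $r(\phi(\zeta))\leq|\zeta|$. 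This is where the nonnegative holomorphic sectional curvature is used essentially: it yields a Schwarz--Ahlfors-type domination of $\phi^*g$ by the flat disk metric when $\phi$ is suitably normalized at $0$. Pulling the barrier back to $D$ and applying the one-variable Hadamard inequality then transfers the bound back to $x$.

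From convexity, the second assertion is immediate: $\log M_f(kr)-\log M_f(r)$ is the difference of $\log M_f$ over a fixed-length interval in $\log r$, and for a convex function such a difference is nondecreasing in $r$. For the last assertion, if $f\in\mathcal{O}_d(M)$ then $\log M_f(r)\leq d\log r + O(1)$ as $r\to\infty$, so the right-hand slope of $\log M_f$ (as a function of $\log r$) is $\leq d$ at infinity; by convexity the slope is then $\leq d$ everywhere, which says exactly that $\log M_f(r)-d\log r$, i.e. $M_f(r)/r^d$, is nonincreasing.

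The main obstacle is Step 3: constructing the holomorphic disk $\phi$ through $p$ passing through (or near) an arbitrary $x\in B(p,R)\setminus\overline{B(p,a)}$ with the controlled metric comparison $r(\phi(\zeta))\leq |\zeta|$, while avoiding cut-locus issues for the distance $r$. The real exponential map $\exp_p$ does not send complex lines in $T_pM$ to holomorphic curves in $M$, so one must build $\phi$ differently; it is precisely this construction, using the holomorphic-sectional-curvature hypothesis via a Wu--Yau / Schwarz-type estimate, that carries the geometric content of the proposition.
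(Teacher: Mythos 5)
There is a genuine gap, and you have located it yourself: Step 3 is not a technical detail but the entire content of the theorem, and the route you propose for it would not work. First, a Schwarz--Ahlfors/Yau-type domination $\phi^*g\leq$ (flat disk metric) requires the \emph{target} to have holomorphic (bi)sectional curvature bounded above by a \emph{negative} constant; nonnegative holomorphic sectional curvature gives no such estimate, so the hypothesis of the proposition cannot be fed into a Schwarz lemma in the direction you need. Second, even setting the metric comparison aside, there is no reason a nonconstant holomorphic disk through $p$ passing through (or near) a prescribed point $x$ of the annulus exists on a general complete K\"ahler manifold: the flat picture works only because $\mathbb{C}^n$ is foliated by complex lines through the origin, and $\exp_p$ does not produce holomorphic curves, as you note. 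So the barrier estimate, and with it the convexity of $\log M_f$ in $\log r$, is not established by your argument; only the two elementary deductions from convexity (monotonicity of $M_f(kr)/M_f(r)$ and of $M_f(r)/r^d$) are correct as written.

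For comparison: this proposition is not proved in the present paper at all; it is quoted from \cite{[L1]} (the three circle theorem). The proof there stays entirely on $M$ and never constructs holomorphic curves. One uses that $u=\log|f|$ is plurisubharmonic, and that under nonnegative holomorphic sectional curvature the Hessian comparison theorem gives, outside the cut locus, nonpositivity of $\sqrt{-1}\partial\bar\partial\log r$ evaluated on the complex line spanned by $\nabla r$ and $J\nabla r$ (in the model $\mathbb{C}^n$ one has $\nabla^2\log r(\nabla r,\nabla r)=-r^{-2}$ and $\nabla^2\log r(J\nabla r,J\nabla r)=r^{-2}$, and the curvature hypothesis pushes the second term down). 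The three circle inequality then follows from a maximum principle applied to $\log|f|-\alpha\log r$ on the annulus, with Calabi's trick handling the cut locus and a small perturbation providing strictness. If you want to complete your write-up, replace the disk construction by this Hessian-comparison-plus-maximum-principle argument; your reductions of the two monotonicity statements can be kept verbatim.
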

By this proposition, 
\begin{equation}\label{24}|f_i(x)|\leq (2k)^d\end{equation} on $B(p, 2kr_i)$ for $k>1$. By Cheng-Yau's gradient estimate \cite{[CY]},  \begin{equation}\label{25}|\nabla_{g_i}f_i(x)|\leq a(n)k^{d-1}\end{equation} on $B(p, kr_i)$. Thus there exists a subsequence of $f_i$ converging to $f_\infty$ uniformly on each compact set on $M_\infty$. By theorem \ref{cc}, $M_\infty$ is a metric cone, say $M_\infty = \Sigma\times_{r^2}\mathbb{R}^+$ with vertex $p_\infty$. Denote points on $M_\infty$ by $(u, r)$ where $u\in\Sigma$, $r\geq 0$. 
\begin{prop}\label{prop-100}
$f_\infty$ is a homogeneous function of degree $d$. That is, \begin{equation}
\label{23}f_\infty(u, e^sr) = e^{ds}f_\infty(u, r).\end{equation}
\end{prop}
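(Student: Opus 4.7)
The plan is to combine the log-convexity of $\log M_f$ in $\log r$ (Proposition~2) with the fact that $d$ is exactly the order of $f$ at infinity. First I would show that the growth saturates in the limit: $M_{f_\infty}(r) = r^d$ for every $r > 0$. Writing $\phi(t) = \log M_f(e^t)$, Proposition~2 gives $\phi$ convex with $\phi'_+ \leq d$; were $\phi'_+(t)$ not to tend to $d$ as $t \to \infty$, then by averaging $\phi(t)/t$ would converge to some $\gamma < d$, contradicting $\limsup \phi(t)/t = d$. Hence $\phi'_+(t) \to d$, so for any $k > 0$,
\[
\log \frac{M_f(k r_i)}{M_f(r_i)} \;=\; \int_{\log r_i}^{\log r_i + \log k} \phi'_+(s)\,ds \;\longrightarrow\; d\log k,
\]
and uniform convergence of $f_i$ on compacta of $M_\infty$ gives $M_{f_\infty}(r) = \lim_i M_f(r r_i)/M_f(r_i) = r^d$.

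Next, to extract pointwise homogeneity, I would compare two rescalings. Fix $s \in \mathbb{R}$, set $\tilde r_i = e^s r_i$ and $\tilde f_i = f/M_f(\tilde r_i) = (M_f(r_i)/M_f(\tilde r_i))\, f_i$; the multiplicative factor tends to $e^{-sd}$ by the previous step. Along a subsequence, $(M, \tilde r_i^{-2} g, p)$ converges to a tangent cone isometric to $M_\infty$ and $\tilde f_i$ converges to a limit $\tilde f_\infty$. Tracking $z_i \in M$ with $d_g(z_i, p) \to e^s r \cdot r_i$: in the $r_i$-scaling $z_i \to (u, e^s r) \in M_\infty$, while in the $\tilde r_i$-scaling $z_i \to (u, r)$ in the isometric tangent cone (consistent angular coordinate). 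Equating the two expressions for $\lim_i \tilde f_i(z_i)$ yields
\[
\tilde f_\infty(u, r) \;=\; e^{-sd}\, f_\infty(u, e^s r).
\]

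The final step identifies $\tilde f_\infty$ with $f_\infty$ under the canonical isometry of the two tangent cones preserving cross-section coordinates. Using the uniform bounds $(\ref{24})$--$(\ref{25})$ to get equicontinuity and boundedness of the $f_i$'s, a Cantor diagonal argument over a countable dense set of $s$ produces a subsequence along which every rescaled limit exists and $\tilde f_\infty = f_\infty$ on $M_\infty$. The displayed equation then becomes $f_\infty(u, r) = e^{-sd} f_\infty(u, e^s r)$ for $s$ in a dense set, and Lipschitz continuity in $s$ extends this to all $s \in \mathbb{R}$, yielding $(\ref{23})$.

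The main obstacle is the identification $\tilde f_\infty = f_\infty$: tangent cones and subsequential limits of scaled holomorphic functions are not a priori unique, so one must carefully align subsequences so that the two limit functions match under a common parameterization of the cone. A conceptually cleaner alternative would be to first observe that $f_\infty$ is harmonic on $M_\infty$ via Lemma~\ref{lemma-10} (applied to the harmonic real and imaginary parts of each $f_i$), then decompose $f_\infty$ into homogeneous harmonic modes on the cone $\Sigma \times_{r^2} \mathbb{R}^+$, in the spirit of Colding--Minicozzi \cite{[CM]}; the identity $M_{f_\infty}(r) = r^d$ valid simultaneously for small and large $r$ then forces every mode of degree $\neq d$ to vanish, yielding $(\ref{23})$.
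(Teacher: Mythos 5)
Your opening step is fine and coincides with the paper's: the convexity of $\log M_f$ in $\log r$ (equivalently, the monotonicity of $M_f(kr)/M_f(r)$ from the three circle theorem) together with $d$ being the order at infinity gives $\lim_i M_f(rr_i)/M_f(r_i)=r^d$, hence $M_{f_\infty}(r)=r^d$, which is exactly (\ref{eq-106}). The genuine gap is in your main route, at the step you yourself flag: the identification $\tilde f_\infty=f_\infty$. Once the subsequence $r_i$ and the limit $f_\infty$ are fixed, the blow-down at the scales $\tilde r_i=e^sr_i$ is not a new, freely chosen object: $(M,\tilde r_i^{-2}g,p)$ converges along the same subsequence to $(M_\infty,e^{-2s}g_\infty,p_\infty)$, which is identified with $M_\infty$ by the dilation $(u,\rho)\mapsto(u,e^{-s}\rho)$, and under this identification $\tilde f_i=\bigl(M_f(r_i)/M_f(\tilde r_i)\bigr)f_i$ converges precisely to $\tilde f_\infty(u,\rho)=e^{-sd}f_\infty(u,e^s\rho)$ --- your own displayed formula. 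Thus the assertion $\tilde f_\infty=f_\infty$ is literally equivalent to (\ref{23}) for that value of $s$, and no Cantor diagonal extraction over a dense set of $s$ can produce it, because there is nothing left to choose: the limit is already determined by $f_\infty$. As written, the main argument is circular.

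The ``conceptually cleaner alternative'' you mention only in passing is in fact the paper's actual proof, and it is the argument you should carry out in full: $f_\infty$ is harmonic on $C(\Sigma)$ by Lemma \ref{lemma-10} (holomorphic functions are harmonic, and (\ref{24}), (\ref{25}) give the needed bounds); after noting that $\Sigma$ supports a Laplacian (volume doubling, weak Poincar\'e inequality, $\nu$-rectifiability, as in Ding's work), one uses the cone formula (\ref{eq-99}) to expand $f_\infty=\sum_i c_ir^{\alpha_i}\phi_i$ as in (\ref{eq-105}), with finitely many terms because $I(r)=\sum_i|c_i|^2r^{2\alpha_i}$ and $f_\infty$ has polynomial growth; then $I(r)\leq M_{f_\infty}(r)^2=r^{2d}$ forces $\alpha_i\geq d$ by letting $r\to0$ and $\alpha_i\leq d$ by letting $r\to\infty$, so only the degree-$d$ mode survives and (\ref{23}) follows. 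So your ingredients are the right ones, but only the spectral route closes the argument; promote it from a remark to the proof and drop the two-rescaling comparison.
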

\begin{proof}
By the proposition above,  for any $k>1$, $\frac{M_f(kr)}{M_f(r)}$ is monotonic increasing. Since $d = \overline{\lim\limits_{r\to\infty}}\frac{\log M_f(r)}{\log r}$,  $$\lim\limits_{r\to\infty}\frac{M_f(kr)}{M_f(r)} =k^d.$$ Let $M_{f_\infty}(r)$ be the maximal modulus of $f_\infty$ on $B(p_\infty, r)$. Then for any $r_1>r_2>0$,
$$\frac{M_{f_\infty}(r_1)}{M_{f_\infty}(r_2)} = \lim\limits_{i\to\infty}\frac{M_f(r_1r_i)}{M_f(r_2r_i)} = \frac{r_1^d}{r_2^d}.$$ Since $M_{f_\infty}(1) = 1$,
\begin{equation}\label{eq-106}
M_{f_\infty}(r) = r^d.
\end{equation}

$f_i$ are harmonic functions. By Lemma \ref{lemma-10}, $f_\infty$ is harmonic on $C(\Sigma)$. Also it is of polynomial growth. 
It is easy to see that the $(2n-1)$ dimensional Hausdorff measure on $\Sigma$ satisfies the volume doubling property and the weak Poincare inequality. See lemma $4.3$ in \cite{[Di1]} for a proof. Also, one can directly check that $\Sigma$ is $\nu$-rectifiable. Therefore, we have a Laplacian operator on $\Sigma$.

On the metric cone $C(\Sigma)$, there is a decomposition formula (see \cite{[Di2]}\cite{[Di1]}).
\begin{equation}\label{eq-99}
\Delta u = -\frac{\partial^2 u}{\partial r^2} -\frac{2n-1}{r}\frac{\partial u}{\partial r}+\frac{1}{r^2}\Delta_\Sigma u.
\end{equation}
Therefore, if $\phi_i$ is the $i$-th eigenfunction of $\Delta_\Sigma$ with eigenvalue $\lambda_i$, then $r^{\alpha_i}\phi_i(x)$ is harmonic. Here 
\begin{equation}\label{eq-70}
\lambda_i=\alpha_i(2n+\alpha_i-2).
\end{equation} We normalize so that $\dashint_\Sigma|\phi_i|^2 = 1$. For any harmonic function (complex function) $u$ on $X$,  we can write (see \cite{[Che0]}\cite{[Di2]})
\begin{equation}\label{eq-105}
u = \sum\limits_{i=0}^\infty c_ir^{\alpha_i}\phi_i.
\end{equation}
Here $c_i$ are complex constants.
Define $I(r) = \frac{1}{Vol(\partial B(p_\infty, r))}\int_{\partial B(p_\infty, r)}|u|^2$. Then $$I(r) = \sum\limits_{i=0}^\infty |c_i|^2r^{2\alpha_i}.$$
This implies that if $u$ is of polynomial growth on $C(\Sigma)$, there are only finitely many terms in (\ref{eq-105}).
\begin{claim}\label{cl-5}
$f_\infty = r^d\phi$ for some $\phi$ on $\Sigma$ with $\Delta_\Sigma\phi = d(2n+d-2)\phi$.
\end{claim}
\begin{proof}
Since $f_\infty$ is harmonic, by (\ref{eq-105}) and (\ref{eq-106}), if $r\to 0$, we find $\alpha_i\geq d$ for all $i$. If $r\to \infty$, we find $\alpha_i\leq d$ for all $i$.
\end{proof}
The proof of the proposition is complete.
\end{proof}

According to theorem \ref{thm1}, there exist isometries $\sigma_t$ on $M_\infty$. The next result states the behavior of $f_\infty$ under $\sigma_t$. 
\begin{prop}\label{thm-10}
Let $M^n, M_i^n, M_\infty$ be stated as above.  Then $f_\infty(\sigma_t(x)) = e^{\sqrt{-1}dt}f_\infty(x)$ for any $x\in M_\infty$ and $t\in\mathbb{R}$. 
\end{prop}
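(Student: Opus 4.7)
The plan is to derive, at the smooth approximating level, the pointwise ODE $\frac{d}{dt}f_i(\sigma_{i,t}(x)) = \sqrt{-1}\,d\cdot f_i(\sigma_{i,t}(x))+o(1)$, pass it to the limit, and integrate. The key observation uniting holomorphicity of $f$ and the defining formula $X_i = J\nabla\rho_i$ is that, because $f_i$ is holomorphic on $M_i$, the identity $df_i\circ J = \sqrt{-1}\,df_i$ gives
\begin{equation*}
X_i(f_i) \;=\; df_i(J\nabla\rho_i) \;=\; \sqrt{-1}\,\langle\nabla f_i,\nabla\rho_i\rangle.
\end{equation*}
Since, by (\ref{1}), $\nabla\rho_i$ is $L^2$-close to $\nabla(r^2/2)=r\nabla r$ on the annulus, and $f_\infty=r^d\phi$ is homogeneous of degree $d$ by Proposition \ref{prop-100}, the right-hand side should converge to $\sqrt{-1}\,d\,f_\infty$, because formally $\langle\nabla f_\infty,r\nabla r\rangle = r\,\partial_r f_\infty = d\, f_\infty$.

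Concretely, I would integrate along the flow to obtain the exact identity
\begin{equation*}
f_i(\sigma_{i,t}(x)) - f_i(x) \;=\; \sqrt{-1}\int_0^t \langle\nabla f_i,\nabla\rho_i\rangle\bigl(\sigma_{i,s}(x)\bigr)\,ds,
\end{equation*}
on the annulus $r^{-1}[5,6]$ of the rescaled manifold (where the Cheeger--Colding approximation $\rho_i$ exists). Using the uniform bound (\ref{25}) on $|\nabla f_i|$, Cauchy--Schwarz, and the $L^2$ estimate (\ref{1}), one replaces $\nabla\rho_i$ by $r\nabla r$ up to an error $o(1)$ in $L^1$ along the flow. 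Standard Ricci-limit theory for harmonic functions gives $f_i\to f_\infty$ uniformly with gradients converging in $L^2$ on compact regular subsets, and combined with the measure convergence $\sigma_{i,s}\to\sigma_s$ from Claim \ref{cl6} and the segment inequality used in Proposition \ref{thm0}, the integrated identity passes to the limit, for a.e.~$x$, to
\begin{equation*}
f_\infty(\sigma_t(x)) - f_\infty(x) \;=\; \sqrt{-1}\,d\int_0^t f_\infty\bigl(\sigma_s(x)\bigr)\,ds.
\end{equation*}
This linear Volterra equation has the unique continuous solution $f_\infty(\sigma_t(x)) = e^{\sqrt{-1}dt}f_\infty(x)$ on the annulus. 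Extending via the homogeneity of $f_\infty$ (Proposition \ref{prop-100}) and the fact (Claim \ref{cl7}) that $\sigma_t$ commutes with the radial homothety and preserves $r$, the identity propagates to all of $M_\infty$, and by continuity of both sides to every $x$, not just a.e.

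The main obstacle will be the limit passage in the integrand, since $\sigma_{i,t}$ converges only in measure, $\nabla\rho_i-r\nabla r$ is controlled only in $L^2$, and the convergence $\nabla f_i\to\nabla f_\infty$ is not a priori pointwise. The remedy is to restrict to a subset $K_i$ of almost full measure in the annulus on which all of the following hold simultaneously: the Cheeger--Colding $L^2$ bounds are realized pointwise up to a small error on $s$-averages (apply the segment inequality to the integrand as in the proof of Proposition \ref{thm0}), $\sigma_{i,s}$ is close to $\sigma_s$ uniformly in $s$ (Claim \ref{cl6}), and $f_i$ and its gradient approximate $f_\infty$ and its gradient. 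On $K_i$ the bounded convergence theorem lets us take $i\to\infty$ inside the $s$-integral. Once the integral identity is established on a measure-dense subset, the continuous identity $f_\infty\circ\sigma_t = e^{\sqrt{-1}dt}f_\infty$ follows as above.
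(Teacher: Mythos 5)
Your route is genuinely different from the paper's: you work at the scale of the annulus, use holomorphicity to write $\frac{d}{dt}f_i(\sigma_{i,t}(x))=\sqrt{-1}\,\langle\nabla f_i,\nabla\rho_i\rangle(\sigma_{i,t}(x))$, and try to pass the integrated ODE to the limit to get a Volterra identity, whereas the paper blows up at a regular point $x$ at scales $t_j$, shows the difference quotients $h_j=(f_j-f_j(x_j))/t_j$ converge to a complex \emph{linear} function $h$ on $\mathbb{C}^n$ (Claim \ref{cl-100}), and shows via Lemma \ref{lm-100} that the flows of $J\nabla\rho_i$ and $\nabla\rho_i$ become the coordinate translations $(0,1,0,\dots,0)$ and $(1,0,\dots,0)$, so that the derivative along $\sigma_t$ is read off from values of $h$ and the radial derivative is handled purely by homogeneity (\ref{eq-112}). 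The point of that design is that only \emph{values} of functions ever need to be compared across the Gromov--Hausdorff approximations, never gradients.

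That is exactly where your proposal has a genuine gap. The crux of your argument is the replacement $\langle\nabla f_i,\nabla\rho_i\rangle\approx d\,f_i$ (equivalently $r_i\partial_{r_i}f_i\to d\,f_\infty$) in $L^1$, or at least in measure, on the annulus, so that the $s$-integral along $\sigma_{i,s}$ converges for a.e.\ $x$. Nothing in the paper, and nothing you cite, provides this: estimate (\ref{1}) controls $\rho_i$, not $f_i$; uniform convergence $f_i\to f_\infty$ plus Lemma \ref{lemma-10} gives no control of derivatives; and the segment-inequality/good-set device you invoke only transfers an $L^1$-smallness that you do not yet have -- it cannot create it. The sentence ``standard Ricci-limit theory for harmonic functions gives gradients converging in $L^2$ on compact regular subsets'' is precisely the missing ingredient, and it is a substantive theorem about strong $W^{1,2}$-convergence across varying spaces (of the type later formalized by Honda), not something assembled in this paper; moreover you need it in a form compatible with pairing against $\nabla\rho_i$ (itself only $L^2$-close to the radial field) and with evaluation along flows that converge only in measure. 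A cheaper repair is available but requires an argument you have not given: integration by parts, $\langle\nabla f_i,\nabla\rho_i\rangle=\operatorname{div}(f_i\nabla\rho_i)-f_i\Delta\rho_i$ with $\Delta\rho_i\approx 2n$ in $L^1$ by (\ref{1}), yields only \emph{weak} convergence of $\langle\nabla f_i,\nabla\rho_i\rangle$ to $d\,f_\infty$, and one must then combine this with the in-measure convergence of $\int_0^t[\langle\nabla f_i,\nabla\rho_i\rangle-d f_i](\sigma_{i,s}(x))\,ds$ (which follows from your exact flow identity) to conclude the weak limit is the a.e.\ limit; as written, your proposal asserts bounded convergence on good sets, which weak convergence does not justify because of possible oscillation of the gradients. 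Either supply the strong gradient-convergence theorem with a correct statement and reference, or carry out the weak-convergence bookkeeping above, or follow the paper's blow-up argument, which avoids the issue altogether.
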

\begin{proof}
We may assume $x\neq p_\infty$, otherwise the conclusion is obvious, as $f(p_\infty) = 0$. As regular points are dense on $M_\infty$ and $\sigma_t$ preserves regular points (isometry), we can also assume $x$ is a regular point. 
It suffices to prove that for any sequence $t_j\to 0$, there exists a subsequence so that 
\begin{equation}\label{22}\lim\limits_{j\to\infty}\frac{f_\infty(\sigma_{t_j}(x))-f_\infty(x)}{t_j} = \sqrt{-1}df_\infty(x).\end{equation} We further assume $r(x) = 1$ (the general case follows from a rescaling). Define \begin{equation}\label{eq-119}(N'_j, g'_j, x'_j)=(M_\infty, t_j^{-2}g_\infty, x).\end{equation} 
Then $(N'_j, g'_j, x'_j)$ converges to $(\mathbb{R}^{2n}, 0)$ in the pointed Gromov-Hausdorff sense. 
Recall $(M_i, g_i, p_i)$ pointed converges to $(M_\infty, g_\infty, p_\infty)$. Take a function $\mathbb{N}\to\mathbb{N}$: $i = i(j)$ which is increasing sufficiently fast.
Consider points $x''_i\in M_i$ converging to $x\in M_\infty$, define \begin{equation}\label{eq-120}(N_j, g_j, x_j) = (M_{i(j)}, t_j^{-2}g_{i(j)}, x''_{i(j)}).\end{equation}
We may assume \begin{equation}\label{eq-121}d_{GH}(B_{N_j}(x_j, K_j), B_{N'_j}(x'_j,  K_j))\leq\frac{1}{K_j},\end{equation} where $K_j$ is a sequence going to infinity.
Thus $(N_j, g_j, x_j)$ pointed converges to ($\mathbb{R}^{2n}, 0$).  Note $f_j := f_{i(j)}$ are holomorphic functions on $N_j$. 

According to \cite{[CC1]}, for any $R>0$, there exist harmonic functions $b_1,..., b_{2n}$ on $B(x_j, 3R)$ such that \begin{equation}\label{eq4}\dashint_{B(x_j, 2R)}  \sum\limits_{k=1}^{2n}|\nabla(\nabla b_k)|^2 +\sum\limits_{k, l}|\langle\nabla b_k, \nabla b_l\rangle - \delta_{kl}|^2\leq \Phi(\frac{1}{i}|R, n);\end{equation} \begin{equation}\label{eq5}|\nabla b_k|\leq C_1(n)\end{equation} on $B(x_j, 2R)$.  
According to equation (9.25) and the first paragraph of page $912$ in \cite{[CCT]}, we may also assume \begin{equation}\label{eq14}\dashint_{B(x_j, R)} |J\nabla b_{2s-1} - \nabla b_{2s}|^2 \leq \Phi(\frac{1}{i}|n, R)\end{equation} for $1\leq s\leq n$.
By taking $R\to\infty$ and a diagonal subsequence argument, we can define a linear complex structure on the limit space $\mathbb{R}^{2n}$: \begin{equation}\label{eq15}J\nabla b_{2s-1} = \nabla b_{2s}, J\nabla b_{2s} = -\nabla b_{2s-1}\end{equation} for 
$1\leq s\leq n$. In this way, we identify $\mathbb{R}^{2n}$ with $\mathbb{C}^n$.
On $N_j$, define \begin{equation}\label{eq-118}h_j(y) = \frac{f_j(y)-f_j(x_j)}{t_j}.\end{equation} By the local bound of $f_j$ and the gradient estimate (\ref{24}), (\ref{25}), $h_j$ are local Lipchitz functions on $N_j$: \begin{equation}\label{26}|\nabla h_j|\leq c(n), h_j(x_j) = 0\end{equation} in $B(x_j, \frac{1}{10t_j})$.   Arzela-Ascoli theorem implies that a subsequence of $h_j$ converges uniformly in each compact set to a Lipchitz function $h$ on $\mathbb{C}^n$.
\begin{claim}\label{cl-100}
$h$ is complex linear on $\mathbb{C}^n$ with $h(0) = 0$.
\end{claim}
\begin{proof}
By (\ref{eq-118}) and that $(N_j, g_j, x_j)$ pointed converges to $(\mathbb{R}^{2n}, 0) = (\mathbb{C}^n, 0)$, $h(0) = 0$.
As $h$ is Lipchitz, it suffices to prove $h$ is holomorphic on $\mathbb{C}^n$. The proof of this fact is contained in lemma $4$ in \cite{[L2]}.
\end{proof}

Recall the function $\rho_i$ appeared in (\ref{1}).
To continue the proof, we consider the flow $\lambda_i(t)$ on $M_i$ generated by $\nabla\rho_i$.  For simplicity, we only consider the flow in $B_i=B(p_i, 2)\backslash B(p_i, \frac{1}{2})$ and $|t|\leq \frac{1}{10C_1(n)}$ ($C_1(n)$ appears in (\ref{2})). 

\begin{claim}\label{cl-20}
Let $\Omega$ be any measurable subset of $B_i$, then $|\lambda_{ i}(t)^*vol(\Omega)-e^{2nt}vol(\Omega)|<\Phi(\frac{1}{i})$ for $|t|\leq \frac{1}{10C_1(n)}$.
\end{claim}
\begin{proof}
By (\ref{1}) and similar arguments as in lemma \ref{cl4}, we see $|\frac{d\lambda_{ i}(t)^*vol(\Omega)}{dt}- 2n\lambda_{ i}(t)^*vol(\Omega)|<\Phi(\frac{1}{i})$. Then the proof follows from integral estimate.

\end{proof}
\begin{lemma}\label{lm-20}
For $y\in B_i$ and $|t|\leq \frac{1}{10C_1(n)}$, $\lim\limits_{i\to\infty}(r_i(\lambda_i(t, y)) -e^t r_i(y)) = 0$ in the measure sense. Here $r_i(y) = d(p_i, y)$.
\end{lemma}
\begin{proof}
Define $$H_i(t) = \int_{B_i}|\rho_i(\lambda_i(t, y))-e^{2t}\rho_i(y)|^2dy.$$ By (\ref{2}), $\rho_i- \frac{1}{2}r_i^2\to 0$ as $i\to \infty$. It suffices to prove that $$\lim\limits_{i\to\infty}H_i(t) = 0.$$ As $\rho_i(\lambda_i(t, y))$ is bounded for $|t|\leq\frac{1}{10}$ and $y\in B_i$, there exists a constant $C$ so that\begin{equation}\label{eq41}\begin{aligned}|\frac{dH_i}{dt}| &\leq C \int_{B_i}||\nabla\rho_i(\lambda_i(t, y))|^2-2e^{2t}\rho_i(y)|dy\\&\leq C2H_i^\frac{1}{2}(vol(B_i))^\frac{1}{2}+C\int_{B_i}||\nabla\rho_i(\lambda_i(t, y))|^2-2\rho_i(\lambda_i(t, y))|dy.\end{aligned}\end{equation} 
With (\ref{1}) and (\ref{2}), one can easily show that $$\int_{B_i}||\nabla\rho_i(\lambda_i(t, y))|^2-2\rho_i(\lambda_i(t, y))|d\lambda_{ i}(t)^*vol(y)< \Phi(\frac{1}{i}).$$ 
Then by claim \ref{cl-20} and similar arguments between (\ref{7}) and (\ref{eq-31}), $$\int_{B_i}||\nabla\rho_i(\lambda_i(t, y))|^2-2\rho_i(\lambda_i(t, y))|dy< \Phi(\frac{1}{i}).$$ Note this is the last term of (\ref{eq41}). ODE arguments give the proof.\end{proof}

\begin{claim}\label{cl-19}
For $y\in B_i$ and $|t|\leq \frac{1}{10C_1(n)}$, $\lim\limits_{i\to\infty}(d(\lambda_i(t, y), y) - |e^t-1|r_i(y))\leq 0$  in the measure sense.
\end{claim}
\begin{proof}
We use similar arguments as in lemma $2.1$, step $2$ in \cite{[KW]}.
\begin{equation}\label{eq42}
\begin{aligned}
\int_{B_i}\int_0^t||\nabla\rho_i(\lambda_i(s, y))|-e^s\sqrt{2\rho_i(y)}|dsdy&=\int_0^t\int_{B_i}||\nabla\rho_i(\lambda_i(s, y))|-e^s\sqrt{2\rho_i(y)}|dyds\\&\leq 4\int_0^t\int_{B_i}||\nabla\rho_i(\lambda_i(s, y))|^2-2e^{2s}\rho_i(y)|dyds\\&<\Phi(\frac{1}{i}).\end{aligned}
\end{equation}
In the equation, we used that $e^t\sqrt{2\rho_i(y)}\to e^tr_i(y)> \frac{1}{4}$ and (\ref{eq41}).
Note that the length of the curve $\lambda_i(s, y)$ for $s\in[0, t]$ ($[t, 0]$ if $t\leq 0$) is $\int_0^t |\nabla\rho_i(\lambda_i(s, y))|ds$. 
Thus the proof of the claim easily follows by integration.
\end{proof}
\begin{prop}\label{prop-10}
As $i\to\infty$, $\lambda_i(t)$ converges to $\lambda(t)$ in the measure sense. Here $\lambda(t)(u, r) = (u, e^tr)$, the gradient flow of $\frac{r^2}{2}$ on $M_\infty$.
\end{prop}
\begin{proof}
In view of the distance formula on $M_\infty$, the proposition is a consequence of lemma \ref{lm-20} and claim \ref{cl-19}.
\end{proof}

Let $F_j$ be the rescale map from $(M_{i(j)}, g_{i(j)}, x''_{i(j)})$ to $(N_j, g_j, x_j)$. On $N_j$, define vector fields $$V_j=t_j(F_j)_*\nabla\rho_{i(j)} , W_j=t_j(F_j)_*J\nabla\rho_{i(j)}.$$ Given any $R>0$, if $i(j)$ is increasing sufficiently fast, by similar arguments as in (\ref{eq101}), we find 
\begin{equation}\label{eq102}\dashint_{B(x''_{i(j)}, Rt_j)}|\nabla\rho_{i(j)}-\nabla \frac{1}{2}r_{i(j)}^2|^2 + |\nabla^2\rho_{i(j)}-g_{i(j)}|^2<\Phi(\frac{1}{j}|R).\end{equation}
Recall $r(x) = 1$, hence on $M_{i(j)}$, $r_{i(j)}(x''_{i(j)})\to 1$. Thus similar as (\ref{9}),
\begin{equation}\label{eq90}\dashint_{B(x_j, R)}||V_j|-1|+|\nabla V_j|^2+||W_j|-1|+|\nabla W_j|^2\leq \Phi(\frac{1}{j}|R, n).\end{equation}
Note $JV_j = W_j$ on $N_j$. By similar arguments as in (\ref{17}) and (\ref{eq14}), 
we obtain functions $b^j_k(k=1,.., 2n)$ on $B(x_j, 2R)$ satisfying (\ref{eq14}) and
\begin{equation}\label{eq1-3}\int_{B(x_j, R)}|V_j-\nabla b^j_1|^2+|W_j-\nabla b^j_2|^2+\sum\limits_{k=1}^{2n}(|\nabla^2 b^j_k|^2+\sum\limits_{1\leq k, l\leq 2n}|\langle\nabla b^j_k, \nabla b^j_l\rangle -\delta_{kl}|^2<\Phi(\frac{1}{j}|R). \end{equation} As $R\to\infty$, these functions $b^j_k$ converge in a subsequence to standard real coordinates $b_k$ on $\mathbb{C}^n$ which is identified with a tangent cone at $x$. Below we shall use these coordinates.
Let $\overline\sigma_j(t)$ and $\overline\lambda_j(t)$ be the flows on $N_j$ generated by $W_j$ and $V_j$.
\begin{lemma}\label{lm-100}
In the measure sense, $\lim\limits_{j\to\infty}(b^j_k(\overline\sigma_j(t)(p))- b^j_k(p)-t\delta_{k2})=0$ for $p\in B(x_j, 10)$. Similarly, $\lim\limits_{j\to\infty}(b^j_k(\overline\lambda_j(t)(p)) - b^j_k(p)-t\delta_{k1})=0.$ 
\end{lemma}
\begin{proof}
Define $K(k, t) = \int_{z\in B(x_j, 20)}|b^j_k(z) +\delta_{k2}t-b^j_k(\overline\sigma_{j}(t)(z))|^2d(\overline\sigma_{j}(t))^*vol(z).$ Then we can prove $|\frac{dK}{dt}|\to 0$ as in (\ref{15}). The proof is done by simple integration.
\end{proof}

By (\ref{eq-119}), (\ref{eq-120}) and (\ref{eq-121}), let $q_j$ be a pre-image of $\sigma_{t_j}(x)$ in $N_j$, given by the Gromov-Haudorff approximation. In view of claim \ref{cl6} and lemma \ref{lm-100}, we find \begin{equation}\label{eq-117}q_j\to(0, 1, 0,...., 0)\in\mathbb{C}^n.\end{equation} Note there is no issue to apply the measured convergence above.

Obviously $t_j$ is fixed for each $j$. Letting $i=i(j)$ increase sufficiently fast,  by (\ref{eq-120}) and (\ref{eq-118}), we find \begin{equation}\label{eq-116}|h_j(q_j)-\frac{f_\infty(\sigma_{t_j}(x))-f_\infty(x)}{t_j}|\to 0.\end{equation}

Similarly, let $q'_j$ be the pre-image of $\lambda_{t_j}(x)$ in $N_j$. Then proposition \ref{prop-10} and lemma \ref{lm-100} imply \begin{equation}\label{eq-115}q'_j\to (1, 0, 0, ..., 0)\in\mathbb{C}^n\end{equation} Similar as in (\ref{eq-116}), \begin{equation}\label{eq-114}|h_j(q'_j)-\frac{f_\infty(\lambda_{t_j}(x))-f_\infty(x)}{t_j}|\to 0.\end{equation} By claim \ref{cl-100}, (\ref{eq-117}), (\ref{eq-115}) and that $h_j\to h$ uniformly on each compact set, \begin{equation}\label{eq-113}\lim\limits_{j\to\infty}h_j(q_j)=h(0, 1, .... 0) = \sqrt{-1}h(1, 0, ....., 0)=\sqrt{-1}\lim\limits_{j\to\infty}h_j(q'_j).\end{equation} According to (\ref{23}), \begin{equation}\label{eq-112}\lim\limits_{j\to\infty}\frac{f_\infty(\lambda_{t_j}(x))-f_\infty(x)}{t_j} = df_\infty(x).\end{equation} Putting (\ref{eq-116}), (\ref{eq-114}), (\ref{eq-113}) and (\ref{eq-112}) together, we proved (\ref{22}).
This completes the proof of proposition \ref{thm-10}.\end{proof}

\begin{cor}\label{cor1}
Let $M^n$ be a complete K\"ahler manifold with nonnegative bisectional curvature and maximal volume growth. Let $X=\Sigma\times_{r^2}\mathbb{R}^+$ be a tangent cone at infinity. Let $E = \{q|$q is the order at infinity for some $f\in\mathcal{O}_d(M), d\geq 0\}$ and $v = dim_{\mathbb{Q}}(E\otimes_\mathbb{Z}\mathbb{Q})$. Let $\mathbb{T}^u$(dimension $u$) be the torus given by the closure of $\sigma_t$ in the isometry group of $\Sigma$. Then $u\geq v$. In particular, if $u=1$, then orders of holomorphic functions with polynomial growth are rationally related.
\end{cor}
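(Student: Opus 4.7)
The plan is to convert each polynomial-growth holomorphic function of a given order into a character of the compact torus $\mathbb{T}^u$, and then to bound the $\mathbb{Q}$-rank of the set of orders by the rank $u$ of the character lattice.

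To set up, I would pick a $\mathbb{Q}$-basis $d_1,\dots,d_v \in E$ of $E\otimes_{\mathbb{Z}}\mathbb{Q}$ and, for each $j$, choose $f^{(j)}\in\mathcal{O}_{d_j}(M)$ whose order at infinity equals $d_j$. Let $r_i\to\infty$ be the scaling sequence producing the specific tangent cone $X=\Sigma\times_{r^2}\mathbb{R}^+$ named in the corollary. After passing to a common diagonal subsequence, the rescaled functions in \eqref{21} converge to nontrivial limits $f^{(j)}_\infty$ on $X$. Proposition \ref{prop-100} then gives the homogeneous decomposition $f^{(j)}_\infty(u,r)=r^{d_j}\phi_j(u)$ for a continuous $\phi_j:\Sigma\to\mathbb{C}$, not identically zero thanks to the identity $M_{f^{(j)}_\infty}(r)=r^{d_j}$ from \eqref{eq-106}; and Proposition \ref{thm-10} gives the equivariance $\phi_j(\sigma_t u)=e^{\sqrt{-1}d_j t}\phi_j(u)$.

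The next step is to promote this one-parameter eigenfunction identity to a character of the full torus. Since $\Sigma$ is compact, its isometry group is a compact Lie group in the uniform topology and $\{\sigma_t\}$ is, by hypothesis, dense in the closed subtorus $\mathbb{T}^u$. Choose $u_0\in\Sigma$ with $\phi_j(u_0)\neq 0$; for any $\tau\in\mathbb{T}^u$ and any sequence $t_n$ with $\sigma_{t_n}\to\tau$, continuity of $\phi_j$ forces the phases $e^{\sqrt{-1}d_j t_n}=\phi_j(\sigma_{t_n}u_0)/\phi_j(u_0)$ to converge to a limit $\chi_j(\tau)\in S^1$. The equivariance on the dense subgroup then implies $\phi_j\circ\tau=\chi_j(\tau)\phi_j$ throughout $\Sigma$, from which one verifies that $\chi_j:\mathbb{T}^u\to S^1$ is well-defined, continuous, and a homomorphism, i.e., a character.

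The finishing step is a lattice count. The Pontryagin dual $\widehat{\mathbb{T}^u}$ is a free abelian group of rank $u$; identifying it with a lattice $\Lambda\cong\mathbb{Z}^u$ inside $(\mathfrak{t}^u)^\ast$, each $\chi_j$ corresponds to some $\lambda_j\in\Lambda$. Writing $\sigma_t=\exp(t\xi)$ for the Lie algebra generator $\xi\in\mathfrak{t}^u$ of the one-parameter subgroup $\{\sigma_t\}$, the normalization $\chi_j(\sigma_t)=e^{\sqrt{-1}d_j t}$ identifies $\langle\lambda_j,\xi\rangle$ with a fixed nonzero multiple of $d_j$. If $v>u$ then $\lambda_1,\dots,\lambda_v\in\mathbb{Z}^u$ are $\mathbb{Z}$-linearly dependent, yielding integers $n_j$, not all zero, with $\sum_j n_j\lambda_j=0$; pairing with $\xi$ gives $\sum_j n_j d_j=0$, contradicting the $\mathbb{Q}$-independence of the $d_j$. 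Hence $v\leq u$, and when $u=1$ all orders lie in a common one-dimensional $\mathbb{Q}$-subspace, i.e., are rationally related. The main technical obstacle I anticipate is the continuous extension of the character $t\mapsto e^{\sqrt{-1}d_j t}$ from $\mathbb{R}$ to the closure $\mathbb{T}^u$: without the nonvanishing of $\phi_j$ coming from \eqref{eq-106}, the phases $e^{\sqrt{-1}d_j t_n}$ need not have a well-defined limit as $\sigma_{t_n}\to\tau$, since $d_j t_n$ can wander on $\mathbb{R}$. Once this extension is secured, everything reduces to Pontryagin duality on a compact torus.
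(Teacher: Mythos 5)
Your proposal is correct and follows essentially the same route as the paper: both use the homogeneity of the limit functions (Proposition \ref{prop-100}), the equivariance $f_\infty\circ\sigma_t=e^{\sqrt{-1}dt}f_\infty$ (Proposition \ref{thm-10}), and the density of $\{\sigma_t\}$ in $\mathbb{T}^u$ to realize each order as an integral character of the torus paired against the generator of the one-parameter subgroup, which bounds the $\mathbb{Q}$-rank of the orders by $u$. Your write-up merely makes explicit the continuity/nonvanishing argument extending the phase to a character of the closure, a point the paper passes over tersely, and phrases the final lattice count as a contrapositive rather than the paper's direct formula $d_i=2\pi\sum_j k_{ij}a_j$.
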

\begin{remark}
In the standard $\mathbb{C}^n$ case, $u=v=1$ (Hopf fibration).
Consider the case $\mathbb{C}\times N$, where $N$ is conformal to $\mathbb{C}$, with rotationally symmetric metric of nonnegative curvature. We can also adjust the metric so that $\pi$ is the order of some $f\in\mathcal{O}_d(N)$ at infinity. In this case $u = 2 = v$.
\end{remark}

\begin{proof}
Let $f^i(1\leq i\leq m)$ be holomorphic functions of polynomial growth on $M$ (here $m\in\mathbb{N}$ is arbitrary), with orders $d_i$ at infinity. We assume the limit of these functions on $X$ are $f_\infty^i$. According to (\ref{23}), $f^i_\infty$ is homogeneous of degree $d_i$. Note the limit functions are not necessarily unique. This does not affect our argument. 

Pick $p\in X$ so that $f^i_\infty(p)\neq 0$ for any $i$.  For $t\in\mathbb{T}^u$, define functions $g_i(t) = f^i_\infty(t(p))$.
Write $\mathbb{T}^u = (e^{2\pi\sqrt{-1}x_1},....,e^{2\pi\sqrt{-1}x_u})$ for $(x_1,..., x_u)\in\mathbb{R}^u$.  We identify the tangent space of $\mathbb{T}^u$ at $e$(the identity element) with the tangent space of $\mathbb{R}^u$ at the origin. Let $l$ be the geodesic on $\mathbb{T}^u$ given by the image of $\sigma_t$. Let $Y$ be the tangent of $l$ at $e$. Write $Y = \sum\limits_{j=1}^ua_je_j$, where $e_i = (0, 0, .., 0, 1, 0, ..., 0)\in T_e\mathbb{T}^u, a_i\in\mathbb{R}$. By theorem \ref{thm-10} and that $l$ is dense in $\mathbb{T}^u$, we find $$g_i(x_1,..., x_u) = g_i(e)exp(\sqrt{-1}\sum\limits_{j=1}^ub_{ij}x_j).$$ Here $b_{ij}=2\pi k_{ij}$ for $k_{ij}\in\mathbb{Z}$. Restricting $g_i$ on $l$, we find $$d_i = 2\pi\sum\limits_{j=1}^u k_{ij}a_j.$$ This completes the proof of the corollary.

\end{proof}
\begin{cor}
Given the notations and assumptions as in corollary \ref{cor1},  let $w$ be the dimension of the isometry group of $\Sigma$. Then $w\geq v$.\end{cor}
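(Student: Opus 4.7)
The plan is to observe that this corollary is an immediate consequence of Corollary \ref{cor1} together with the definition of $\mathbb{T}^u$. Recall that $\mathbb{T}^u$ was defined as the closure of the one-parameter subgroup $\sigma_t$ inside the isometry group $\mathrm{Isom}(\Sigma)$. Since $\mathrm{Isom}(\Sigma)$ is a Lie group (by the classical Myers--Steenrod theorem applied in the setting of compact metric length spaces, or more directly, since the closure of any one-parameter subgroup of a topological group of isometries of a compact metric space is a compact abelian Lie group), $\mathbb{T}^u$ is a closed Lie subgroup of $\mathrm{Isom}(\Sigma)$.

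First I would note that as a closed Lie subgroup, $\dim \mathbb{T}^u \leq \dim \mathrm{Isom}(\Sigma)$, i.e.\ $u \leq w$. Next, invoking Corollary \ref{cor1} directly, we have $u \geq v$. Chaining these inequalities gives $w \geq u \geq v$, which is the desired conclusion.

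The only subtle point worth spelling out is the justification that $\mathrm{Isom}(\Sigma)$ is a Lie group of finite dimension $w$ in the first place, so that the inequality $\dim \mathbb{T}^u \le \dim \mathrm{Isom}(\Sigma)$ makes sense; this is standard for the isometry group of a compact Gromov--Hausdorff limit of manifolds with Ricci bounded below, and in fact for any locally compact length space with the doubling property (so one may apply the Fukaya--Yamaguchi or Colding--Naber machinery, or simply use that $\Sigma$ here is the cross section of a tangent cone of a noncollapsed Ricci limit space). No further estimates are required.
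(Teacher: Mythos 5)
Your argument is correct and is exactly the intended one: since $\mathbb{T}^u$ is a closed subgroup of the isometry group of $\Sigma$, one has $w\geq u$, and combining with $u\geq v$ from Corollary \ref{cor1} gives $w\geq v$; the paper leaves this unwritten precisely because it is this immediate chain of inequalities. Your side remark on why $\mathrm{Isom}(\Sigma)$ is a finite-dimensional Lie group is a reasonable extra justification but adds nothing beyond what the paper already implicitly assumes in defining $w$ and the torus $\mathbb{T}^u$.
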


\end{document}